\theoremstyle{plain}
\newtheorem{lem}{Lemma}[section]
\newtheorem{thm}[lem]{Theorem}
\theoremstyle{definition}
\theoremstyle{remark}
\newtheorem{rem}{Remark}[section]
\begin{document}
\title{\large\bf Identification of the reaction coefficient in time fractional diffusion equations}
\author{
Xiaoyan Song\thanks
{College of Mathematics and Econometrics, Hunan University, Changsha 410082, China.
Email: xiaoyansong@hnu.edu.cn}
\and
Guang-Hui Zheng\thanks
{College of Mathematics and Econometrics, Hunan University, Changsha 410082, China.
Email: zhgh1980@163.com}
\and
Lijian Jiang\thanks
{School  of Mathematical Sciences, Tongji University, Shanghai 200092, China. Email: ljjiang@tongji.edu.cn. Corresponding author}
}
\date{}
\maketitle

\begin{center}{\bf ABSTRACT}
\end{center}\smallskip
In this paper, we  present an inverse problem of identifying  the reaction coefficient for time fractional diffusion equations in two dimensional spaces by using boundary Neumann data.
It is  proved that the forward operator is continuous with respect to the unknown parameter. Because the inverse problem is often ill-posed,   regularization strategies are imposed on the least fit-to-data functional to overcome the stability issue.
There may exist various kinds of functions to
reconstruct. It is crucial to choose a suitable regularization method.
We present a multi-parameter
 regularization  $L^{2}+BV$ method for the inverse problem. This can extend
the applicability for reconstructing the unknown functions.
Rigorous analysis is carried out for the inverse problem.
In particular, we analyze the existence and stability
of regularized  variational problem and the convergence.  To reduce the dimension in the inversion for numerical simulation, the unknown coefficient is represented by a suitable set of  basis functions based on a priori information.
 A few numerical examples  are presented for the inverse problem in time fractional diffusion equations to confirm the theoretic analysis and
the efficacy  of the different regularization methods.

\smallskip
{\bf keywords}: time fractional diffusion equation,  reaction inversion,  multi-parameter regularization

\section{Introduction}
Let  $\Omega$  be an open bounded domain in $\mathbb{R}^2$ with a Lipschitz boundary $\partial \Omega$ and $\nu$ be the outward unit normal vector to $\partial \Omega$. Define $\frac{\partial u}{\partial \nu}$=$\nabla u \cdot \nu$. Let $T>0$ be a fixed time length.
Then we consider the time fractional diffusion equation(TFDE) with a reaction term as follows
\begin{eqnarray}
\label{model-fpde}
\begin{cases}
{}_0 D_t^\alpha u(x,t)-\Delta u(x,t)+q(x)u(x,t)=0  \ \ \text{in} \ \  \Omega \times (0,T], \\
u(x,0)=0  \ \ \text{in} \ \ \Omega, \\
u(x,t)=\lambda(t)g(x) \ \ \text{on} \ \  \partial \Omega \times (0,T].
\end{cases}
\end{eqnarray}
where $\alpha \in (0,1)$ is the  fractional order of the derivative in time. Here ${}_0 D_t^\alpha u $ refers to the Caputo derivative \cite{caputo-deri-2,caputo-deri-1} with respect to $t$, i.e.,
\begin{equation}\label{caputa-def}
{}_0 D_t^\alpha u =\frac {1}{\Gamma (1-\alpha)}\int_0^t \frac {\partial u(x,s)}{\partial s}\frac {ds}{(t-s)^\alpha},
\end{equation}
where $\Gamma$ is the Gamma function.

The TFDEs generalize  standard diffusion equations through replacing the integer-order time derivative with a fractional derivative. Compared to the classical derivatives, fractional derivatives are used to simulate anomalous diffusion, where particles spread in a power-law manner \cite{model-applica-1}. The mean square displacement of particles from the original starting site is non-linear growth in time but verifies a generalized Fick's second law. Subdiffusion motion is characterized by an asymptotic long time behavior of the mean square displacement of the power-law pattern
\[
\langle x^{2}(t)\rangle\sim\frac{2K_{\gamma}}{\Gamma(1+\gamma)}t^{\gamma},\ \ t\rightarrow\infty,
\]
where $\gamma$ ($0<\gamma<1$) is the anomalous diffusion exponent and $K_{\gamma}$ is the generalized diffusion coefficient.
The TFDEs are widely used in materials, control, and system identification \cite{caputo-deri-1}.
Eq.(\ref{model-fpde}) can be used to model the anomalous diffusion phenomena in heterogeneous media, see \cite{anomalous-subdiffusion, model-applica-2, model-applica-1}.
This model describes a forward problem if $\lambda(t),g(x),q(x)$ are given.
There are many analysis methods and numerical methods to solve the forward problem, such as finite difference methods \cite{caputo-deri-2, forward-difference-fem-1, forward-difference-2} and finite element method \cite{forward-fem-1, forward-fem-2,forward-difference-fem-1}.
In this work, the mixed finite element method \cite{forward-mixedfem-2,mixed-fem-2,mixed-fem-1} is employed for numerically solving Eq.(\ref{model-fpde}).

In the paper,  we focus on  solving  the following inverse problem for the TFDE,

\emph{Inverse problem(IP)}: Identify the reaction coefficient $q(x)$ from all possible Cauchy data (Dirichlet-to-Neumann map $\Lambda(q)$ defined in Section 2) on boundary $\Gamma\subset\partial\Omega$.

The inverse problems of time fractional diffusion equations have attracted much attention in recent years.
It is  obvious that there are various unknown parameters to recover, such as coefficient, fractional order, the source term and so on.
For example, Cheng in \cite{fde-history-1} studied   a one-dimensional fractional diffusion equation with homogeneous Neumann boundary condition and proved that the uniqueness result for determining the fractional order and diffusion coefficient.
In \cite{fde-history-3}, Sakamoto \emph{et}  \emph{al.} considered  an initial value/boundary value problems for fractional diffusion-wave equation and built the uniqueness in recovering the initial value.
Tuan in \cite{fde-history-4} proved that it is  necessary to take a suitable initial distribution with only finite measurements on the boundary for uniquely recovering  the diffusion coefficient of a one-dimensional fractional diffusion equation.
Moreover, Jin \emph{et} \emph{al.}  in \cite{fde-history-5} considered an inverse problem of reconstructing a spatially dependent  potential term in a one-dimension time fractional diffusion equation through the flux measurements, which is similar to Eq.(\ref{model-fpde}).
Li \emph{et}  \emph{al.} in \cite{fde-history-6} investigated an inverse problem of identifying the space-dependent diffusion coefficient and the fractional order in the one-dimension time fractional diffusion equation.
Recently, Li \emph{et}  \emph{al.}  in \cite{paper-unique} considered an inverse problem for diffusion equations with multiple fractional time derivatives and proved the uniqueness in recovering the number of fractional time-derivative terms, the orders of the derivatives and spatially varying coefficients.
We can see \cite{fde-history-9, Y.Luchko-W.Rundell-2013, fde-history-7, fde-history-8,fde-history-2,G.H.zheng-2010,G.H.zheng-2014} for more literatures in the subject.
In this paper, we concentrate on  the reaction coefficient inverse problem in time fractional diffusion equation.
We note that there are not many  works on the reaction coefficient inverse problems for fractional diffusion equations in two dimension spatial spaces.

The solution to Eq.(\ref{model-fpde}) can be denoted by $u(x,t;q)$ in order to explicitly represent  its dependence on the unknown reaction coefficient $q$.
In this work, we use  Laplace transform with respect to $t$ to prove that the forward operator $F$ is continuous with respect to $q$.
It is  well known that inverse problems are usually ill-posed. To treat the ill-conditioned systems, a popular strategy  is to use a least-squares method by adding some penalty functions to the fit-to-data term,
see \cite{A.Doicu-2010, CR.Vogel-2002}.
However, because  there are various types of unknown parameters to recover, it is  a challenging task to choose a suitable regularization scheme based on a priori information.
The $L^{2}$ penalty \cite{A.Doicu-2010, CR.Vogel-2002}
is the most widely used in inverse problem, which can obtain a good regularization solution for smooth functions.
Total variation (TV) \cite{T.Chan-2003, A.Doicu-2010, sup-L^{1},CR.Vogel-2002} regularization method can penalize highly oscillatory solutions while allowing jumps in the regularization solution.
Moreover, there exist many other parameter regularization methods \cite{M.Belge-2002, K.Ito-2011}, which can improve the inversion reconstruction for difference situations.
In this paper, we  apply $L^{2}$, $BV$ and $L^{2}+BV$ regularization methods \cite{F.Ibarrola-2017} to reconstruct different kinds of the reaction coefficient $q$,  which can be
generalized  in a unified  variational representation.

In the work, we attempt to recover the unknown reaction coefficient for time fractional diffusion equations, and explore the related ill-posedness, regularization method and convergence property. We first prove that the forward operator is continuous with respect to the unknown parameter. Then  we introduce a multi-parameter regularization functional to recover different kinds of  unknown functions such as  smooth functions, functions with jumps and piecewise smooth functions.
Because Neumann data on the boundary is the measurement data for the inverse problem, we need to compute the flux on the boundary when solving the derived variational problem.
  To  get the robust   flux, we use mixed finite element method to solve the time fractional diffusion equations.

The paper is organized  as follows. In Section~2, we give some notations and preliminaries
for the paper.  A uniqueness result of the inverse problem is also presented in the section. In Section~3, we
show  that the continuity of the forward operator $F$. In Section~4, the regularization method is presented  and convergence results are provided. The mixed finite element method to solve the forward problem is presented in Section~5. In Section~6, a few numerical results are presented to confirm the presented analysis and algorithm. Some conclusions and comments are made finally.

\section{Notations and preliminaries}
In this section, we give some notations and preliminaries for the paper.
Let the admissible set as
\begin{equation}\label{admissible-set}
Q_{AD}:=\{q \in L^{\infty}(\Omega), 1/k_{2}\leq q(x)\leq k_{1}\},
\end{equation}
where $k_{1}$  and $k_{2}$ are some positive constants.
In the paper, we use the usual notations for Sobolev spaces \cite{sobolev space}.

We define the forward operator and the Dirichlet-to-Neumann(D-N) map, respectively, by
\begin{equation}\label{D-N map}
\begin{aligned}
&F(\cdot,g):Q_{AD}\mapsto L^{2}([0,T];H^{2}(\Omega)),\\
&\Lambda(q)g:=\frac{\partial u}{\partial \nu}|_{\Gamma \times (0,T]}\in L^{2}([0,T];H^{\frac {1}{2}}(\Gamma)),
\ \ g\in H^{\frac{3}{2}}(\partial\Omega).
\end{aligned}
\end{equation}
From the definitions above, the relationship of $F$ and D-N map can be represented as
\[
\Lambda(q)g=\Upsilon_{\Gamma}^{N}F(q,g),
\] where $\Upsilon_{\Gamma}^{N}$ is the Neumann trace operator.
For $\theta\in (0,\frac{\pi}{2})$ and $T>0$, we set
\[
\Omega_{\theta}:=\{z\in \mathds{C};z\neq0,\mid \arg z\mid<\theta\}.
\]

Next, we state a  result about  the uniqueness for the \emph{IP}.
\begin{lem} \cite{paper-unique}
\label{lem-uniqueness}
Let $\Gamma \subset \partial \Omega$ be an arbitrarily given subboundary and let $\gamma>2$ be arbitrarily fixed. Assume that for some $\theta\in (0,\frac{\pi}{2})$ the function $\lambda\not\equiv0$ can be analytically extended to $\Omega_{\theta}$ with $\lambda(0)=0$ and $\lambda^{'}(0)=0$ and there exists a constant $C>0$ such that $|\lambda^{(k)}(t)| \leq Ce^{Ct},\ t>0,\ 0\leq k\leq2$. We set
\[
\mathcal{U}=\{q\in W^{1,\gamma}(\Omega),\gamma>2, q\geq0 \ \ on \ \ \overline{\Omega}\}.
\]
Then if $q_{1}, \ q_{2} \in \mathcal{U}$ and
\[
\Lambda(q_{1})g=\Lambda(q_{2})g \ \ on \ \ \Gamma,
\]
for all $g\in H^{\frac{3}{2}}(\partial \Omega)$ with $supp \ g\subset\Gamma$, then
\[
q_{1}=q_{2}.
\]
\end{lem}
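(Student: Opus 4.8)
This uniqueness result is established in \cite{paper-unique}; here we only indicate the strategy one would follow. The plan is to use the Laplace transform in time to convert the time fractional inverse problem into a family of elliptic inverse boundary value problems indexed by the Laplace variable, and then to invoke the uniqueness theory for the Calder\'on (Schr\"odinger) problem with partial boundary data in two dimensions.

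First I would apply the Laplace transform in $t$. The growth bounds $|\lambda^{(k)}(t)|\le Ce^{Ct}$, $0\le k\le 2$, together with $\lambda(0)=\lambda'(0)=0$, ensure that the solution $u(\cdot,t;q_i)$ of \eqref{model-fpde} is regular enough and grows at most exponentially in $t$ (which can be read off from the eigenfunction expansion of $u$ and the decay of the Mittag--Leffler functions), so that its Laplace transform $\widehat u_i(x,p)$ is well defined for $\mathrm{Re}\,p$ sufficiently large. Since $u_i(x,0)=0$, the Laplace transform of the Caputo derivative equals $p^{\alpha}\widehat u_i$, and \eqref{model-fpde} is transformed into the elliptic problem
\[
-\Delta \widehat u_i + (q_i(x)+p^{\alpha})\,\widehat u_i = 0 \ \ \text{in}\ \ \Omega, \qquad \widehat u_i = \widehat\lambda(p)\,g \ \ \text{on}\ \ \partial\Omega .
\]
Since $\lambda\not\equiv 0$ is analytic with exponential bounds, $\widehat\lambda$ is holomorphic and not identically zero on a right half-plane, so its zeros there are isolated; I would fix one real $p_0$ in this half-plane with $\widehat\lambda(p_0)\neq 0$. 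Because $q_i\ge 0$, the operator $-\Delta + q_i + p_0^{\alpha}$ is then positive definite, so the corresponding Dirichlet problem is well posed and there is no eigenvalue obstruction.

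Next, the assumption $\Lambda(q_1)g=\Lambda(q_2)g$ on $\Gamma$ for all $g$ with $\mathrm{supp}\,g\subset\Gamma$ means $\partial_\nu u(\cdot,t;q_1)=\partial_\nu u(\cdot,t;q_2)$ on $\Gamma$ for every $t>0$; taking Laplace transforms and using injectivity of the transform gives $\partial_\nu \widehat u_1=\partial_\nu \widehat u_2$ on $\Gamma$ for all such $g$ and all $p$ in the half-plane. Setting $v_i:=\widehat u_i(\cdot,p_0)/\widehat\lambda(p_0)$, the functions $v_i$ solve $-\Delta v_i + V_i v_i=0$ in $\Omega$ with potential $V_i:=q_i+p_0^{\alpha}\in W^{1,\gamma}(\Omega)$, have Dirichlet data $g$ supported in $\Gamma$, and coinciding Neumann data on $\Gamma$. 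Thus the partial Dirichlet-to-Neumann maps of the potentials $V_1$ and $V_2$, with both the excitation and the measurement on the \emph{same} arbitrary subboundary $\Gamma$, coincide. This is precisely the framework of the two-dimensional partial-data uniqueness theorem for the Schr\"odinger equation: the hypothesis $\gamma>2$ puts $V_i$ in the admissible class via the Sobolev embedding $W^{1,\gamma}(\Omega)\hookrightarrow C(\overline{\Omega})$ valid in $\mathbb{R}^2$, and applying that theorem yields $V_1\equiv V_2$, hence $q_1=q_2$.

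The main obstacle is to make these two reductions rigorous. On the analytic side, one must justify the Laplace transform manipulations — the at-most-exponential growth of $u(\cdot,t;q_i)$ and of the relevant derivatives, the commutation of the transform with ${}_0 D_t^\alpha$ and with the Neumann trace on $\Gamma$, and the identification of $\widehat u_i$ with the unique weak solution of the transformed elliptic problem in the half-plane $\mathrm{Re}\,p>C$ — all of which rest on the solution theory for \eqref{model-fpde}, and this is exactly why the analyticity and vanishing conditions on $\lambda$ are imposed. The more substantial point, and the real crux, is the appeal to the deep two-dimensional Calder\'on problem with partial data; the structural hypotheses $q\ge 0$ and $q\in W^{1,\gamma}(\Omega)$ with $\gamma>2$ serve precisely to bring the reduced elliptic problem within the scope of that result.
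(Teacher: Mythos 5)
The paper offers no proof of this lemma---it is imported verbatim from \cite{paper-unique}---and your outline correctly reproduces the strategy of that reference: Laplace transform in $t$ (justified by the growth and vanishing hypotheses on $\lambda$) reducing \eqref{model-fpde} to the elliptic Schr\"odinger problem $-\Delta \widehat u+(q+p^{\alpha})\widehat u=0$, followed by the two-dimensional partial-data uniqueness theorem for potentials in $W^{1,\gamma}(\Omega)$, $\gamma>2$. This is also exactly the reduction the present paper reuses in Section~3 (its equation \eqref{laplace-model} is the transformed problem from Theorem~2.2 of \cite{paper-unique}), so your sketch is consistent with the intended argument and nothing further is required here.
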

It is well known that $W^{1,\gamma}(\Omega)(\gamma>2)$ can be imbedded into $L^{\infty}(\Omega)$ \cite{sobolev space}. Thus in this paper,  we discuss the problem  \emph{IP} in a larger space $L^{\infty}(\Omega)$, which is more appropriate in practice. In reality, we usually do not have the complete  knowledge of the D-N map $\Lambda(q)$. Instead, we can do  a set of $N$ experiments such that  we can define an excitation pattern $g\in H^{\frac{3}{2}}(\partial\Omega)$ and measure the
resulting flux $\frac{\partial u}{\partial \nu}|_{\Gamma \times (0,T]}$  at discrete locations $x\in\Gamma$ using  probes along the accessible part of boundary $\Gamma$.
Thus, the  realistic  \emph{IP} is to identify  $q$ from partial and noisy knowledge of the D-N map.

\section{The continuity of $F(q,g)$ with respect of $q$}
In this section, we investigate  the continuity of the forward operator $F$ with respect to $q$ for a fixed $g\in H^{\frac{3}{2}}(\partial\Omega)$.
By Theorem 2.2 in \cite{paper-unique}, we can obtain the Laplace transform of $u(x,t)$ in Eq.(\ref{model-fpde}) with respect to $t$ as follows
\begin{eqnarray}\label{laplace-model}
\begin{cases}
-\Delta\tilde{u}(x,s)+(q(x)+s^{\alpha})\tilde{u}(x,s)=0  \ \ \text{in} \ \  \Omega, \ \ s>C_{1}, \\
\tilde{u}(x,s)=\tilde{\lambda}(s)g(x) \ \ \text{on} \ \ \partial\Omega, \ \ s>C_{1},
\end{cases}
\end{eqnarray}
where $C_{1}$ is a constant depending only on $\lambda$,  and
\[
\tilde{u}(x,s)=(Lu)(x,s)=\int_0^\infty u(x,t)e^{-st}dt\]
 is the Laplace transform of $u(x,t)$ in $t$ for each fixed $x \in \bar{\Omega}$.
For arbitrarily fixed $s>C_{1}$, we set $C_{0}=s^{\alpha}>0$.
Then by multiplying  the first equation by $\tilde{w} \in H^{1}(\Omega)$ in (\ref{laplace-model}),
 we obtain
\[
-\int_{\Omega}\Delta\tilde{u}\cdot\tilde{w}+
\int_{\Omega}(q(x)+C_{0})\tilde{u}\cdot\tilde{w}=0.
\]
Using the integration by parts formula implies
\[
\int_{\Omega}\triangledown\tilde{u}\cdot\triangledown\tilde{w}-
\int_{\partial\Omega}\tilde{w}\cdot\frac{\partial\tilde{u}}{\partial \nu}+
\int_{\Omega}(q(x)+C_{0})\tilde{u}\cdot\tilde{w}=0.
\]

Let $\langle\cdot,\cdot\rangle$ be  the dual-inner product on $H^{-\frac{1}{2}}(\partial \Omega)\times H^{\frac{1}{2}}(\partial \Omega)$ and $(\cdot,\cdot)$ be the $L^{2}$ inner product.
We define
\begin{eqnarray}\label{laplace-weak-form}
\begin{cases}
\begin{aligned}
a(\tilde{u},\tilde{w})&=\int_{\Omega}\triangledown\tilde{u}\cdot\triangledown\tilde{w}dx+
\int_{\Omega}(q(x)+C_{0})\tilde{u}\cdot\tilde{w}dx,\\
l(\tilde{w})&=\int_{\partial\Omega}\tilde{w}\cdot\frac{\partial\tilde{u}}{\partial \nu}=\langle \tilde{\Lambda}(q)g,\Upsilon_{\partial\Omega}^{D}\tilde{w}\rangle,
\end{aligned}
\end{cases}
\end{eqnarray}
where $\tilde{\Lambda}(q):g\mapsto\frac{\partial\tilde{u}}{\partial \nu}|_{\partial\Omega}$, and $\Upsilon_{\partial\Omega}^{D}$ is the trace operator on $H^{1}(\Omega)$.

For each $q\in Q_{AD}$, let
\[
\triangle Q_{AD}(q):=\{\delta q \in L^{\infty}(\Omega)|\delta q \geq 0, q+\delta q \in Q_{AD}\},
\]
and
\[
\triangle Q_{AD}:=\bigcup\limits_{q\in Q_{AD}}\triangle Q_{AD}(q)=\{\delta q \in L^{\infty}(\Omega)|\delta q \geq 0, \delta q=q-\bar{q}\ \ \text{for some} \ \ q\in Q_{AD}\ \ and \ \ \bar{q}\in Q_{AD}\}.
\]
Thus, $\|\delta q\|_{L^{\infty}(\Omega)}\leq k_{1}$ for all $\delta q \in \triangle Q_{AD}$.
For any $q \in Q_{AD}, \tilde{u}\in H^{1}(\Omega)$, and all $\tilde{w}\in H^{1}(\Omega)$, we have
\begin{equation}
\begin{aligned}
|a(\tilde{u},\tilde{w})|&=|\int_{\Omega}\triangledown\tilde{u}\cdot\triangledown\tilde{w}dx+
\int_{\Omega}(q(x)+C_{0})\tilde{u}\cdot\tilde{w}dx|\\
&\leq \int_{\Omega}|\triangledown\tilde{u}\cdot\triangledown\tilde{w}|dx+
\int_{\Omega}(\|q\|_{L^{\infty}(\Omega)}+C_{0}) |\tilde{u} \tilde{w}|dx\\
&\leq \int_{\Omega}|\triangledown\tilde{u}|\cdot|\triangledown\tilde{w}|dx+
\int_{\Omega}(\|q\|_{L^{\infty}(\Omega)}+C_{0}) |\tilde{u}| |\tilde{w}|dx\\
&\leq \max\{1,\|q\|_{L^{\infty}(\Omega)}+C_{0}\}\big(\|\triangledown\tilde{u}\|_{L^{2}(\Omega)}
\|\triangledown\tilde{w}\|_{L^{2}(\Omega)}+
\|\tilde{u}\|_{L^{2}(\Omega)}\|\tilde{w}\|_{L^{2}(\Omega)}\big)\\
&\leq C_{2}\|\tilde{u}\|_{H^{1}(\Omega)}\|\tilde{w}\|_{H^{1}(\Omega)},
\end{aligned}
\end{equation}
where $C_{2}=\max\{1,\|q\|_{L^{\infty}(\Omega)}+C_{0}\}$,

and
\begin{equation}\label{}
\begin{aligned}
a(\tilde{w},\tilde{w})&=\int_{\Omega}\triangledown\tilde{w}\cdot\triangledown\tilde{w}dx+
\int_{\Omega}(q(x)+C_{0})\tilde{w}\cdot\tilde{w}dx\\
&\geq \int_{\Omega}\triangledown\tilde{w}\cdot\triangledown\tilde{w}dx+
(1/k_{2}+C_{0}) \int_{\Omega}\tilde{w}\cdot \tilde{w}dx\\
&\geq \min\{1,1/k_{2}+C_{0}\}\int_{\Omega}(\triangledown\tilde{w}\cdot\triangledown\tilde{w}+
\tilde{w} \cdot \tilde{w})dx\\
&= 1/C_{3}\|\tilde{w}\|^{2}_{H^{1}(\Omega)}.
\end{aligned}
\end{equation}
where $C_{3}=1/\min\{1,1/k_{2}+C_{0}\}$.

Let $L(H^{1}(\Omega),(H^{1}(\Omega))^*)$  be the space of bounded linear operator form $H^{1}(\Omega)$ to  $(H^{1}(\Omega))^*$.
  By Lax-Milgram Theorem, for each $q\in Q_{AD}$, there exists a unique bounded linear operator
$A(q)\in L(H^{1}(\Omega),(H^{1}(\Omega))^*)$
satisfying
\[
 (A(q)\tilde{u},\tilde{w})=\langle\tilde{\Lambda}(q)g,\Upsilon_{\partial\Omega}^{D}\tilde{w}\rangle
\]
such that

\begin{equation}
\begin{aligned}
\|A(\delta q)\|_{L(H^{1}(\Omega),(H^{1}(\Omega))^*)}&\leq C_{2},\\
\|A^{-1}(\delta q)\|_{L((H^{1}(\Omega))^*,(H^{1}(\Omega)))}&\leq C_{3},\\
\|A(q)\|_{L(H^{1}(\Omega),(H^{1}(\Omega))^*)}&\leq C_{2},\\
\end{aligned}
\end{equation}

Thus by setting $\tilde{F}(q):=\tilde{u}(x,s)$ , it is easy to show
\[
\tilde{F}(q)=A^{-1}(q)(\Upsilon_{\partial\Omega}^{D})^*\tilde{\Lambda}(q)g.
\]
Now we discuss the regularity of the forward operator.

Notice that $\|\delta q\|_{L^{\infty}(\Omega)}\leq k_{1}$, similar to the proof of Lemma 3.1 in \cite{sup-L^{1}}, we can get the following lemma.
\begin{lem}\label{lap-lemma1}
 Let $W$ be a compact subset of
 $H^{1}(\Omega)$.
  Then for $\delta q \in\triangle Q_{AD}$,\\
 \[
 \sup_{w\in W}\left\{\int_{\Omega}|\delta q(x)||w(x)|^{2}dx\right\}\rightarrow0 \ \ as \ \ \|\delta q\|_{L^{1}(\Omega)}\rightarrow0.
 \]
\end{lem}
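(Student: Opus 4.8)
The goal is to show uniform smallness of $\int_\Omega |\delta q(x)|\,|w(x)|^2\,dx$ over a compact set $W\subset H^1(\Omega)$ as $\|\delta q\|_{L^1(\Omega)}\to 0$, keeping in mind the uniform bound $\|\delta q\|_{L^\infty(\Omega)}\le k_1$. The plan is to exploit compactness of $W$ together with the fact that the Sobolev embedding $H^1(\Omega)\hookrightarrow L^p(\Omega)$ is compact for any $p<\infty$ in two dimensions (since $\Omega\subset\mathbb R^2$ is bounded Lipschitz, $H^1(\Omega)\hookrightarrow L^p(\Omega)$ continuously for all $p\in[1,\infty)$, and compactly by Rellich--Kondrachov). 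Hence a compact subset $W$ of $H^1(\Omega)$ is also compact in, say, $L^4(\Omega)$, and in particular $\{|w|^2 : w\in W\}$ is a compact — hence bounded and uniformly integrable — subset of $L^2(\Omega)$. I would record $M:=\sup_{w\in W}\|w\|_{L^4(\Omega)}^2<\infty$.

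The main step is a Hölder/interpolation estimate. Fix $\vp>0$. Since $\delta q\in\triangle Q_{AD}$ satisfies $\|\delta q\|_{L^\infty(\Omega)}\le k_1$, for any exponent split one can write, using Hölder with exponents $2$ and $2$,
\[
\int_\Omega |\delta q|\,|w|^2\,dx
=\int_\Omega |\delta q|^{1/2}\cdot |\delta q|^{1/2}|w|^2\,dx
\le \|\delta q\|_{L^1(\Omega)}^{1/2}\,\Big(\int_\Omega |\delta q|\,|w|^4\,dx\Big)^{1/2}
\le \|\delta q\|_{L^1(\Omega)}^{1/2}\,k_1^{1/2}\,\|w\|_{L^4(\Omega)}^2 .
\]
Taking the supremum over $w\in W$ gives
\[
\sup_{w\in W}\int_\Omega |\delta q|\,|w|^2\,dx
\le k_1^{1/2}\,M\,\|\delta q\|_{L^1(\Omega)}^{1/2}\longrightarrow 0
\quad\text{as}\quad \|\delta q\|_{L^1(\Omega)}\to 0,
\]
which is the claim. (Alternatively, without invoking the $L^4$ embedding one can argue by uniform integrability: compactness of $\{|w|^2\}_{w\in W}$ in $L^2$ implies, by the Dunford--Pettis / Vitali argument, that for each $\vp>0$ there is $\delta>0$ with $\int_E |w|^2<\vp$ uniformly in $w\in W$ whenever $|E|<\delta$; then split $\Omega$ according to $\{|\delta q|>R\}$ versus $\{|\delta q|\le R\}$, using $|\{|\delta q|>R\}|\le \|\delta q\|_{L^1}/R$ and $\int_{\{|\delta q|\le R\}}|\delta q||w|^2\le R\int_\Omega|\delta q|^{?}\cdots$ — but the Hölder route above is cleaner and avoids this.)

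The only point requiring care — and the reason the hypothesis "$W$ compact in $H^1(\Omega)$" (as opposed to merely bounded) together with "$\|\delta q\|_{L^\infty}\le k_1$" is used — is making sure the constant $M=\sup_{w\in W}\|w\|_{L^4(\Omega)}^2$ is genuinely finite and that no uniformity is lost when passing to the supremum; this is immediate once $W$ is seen to be a bounded subset of $L^4(\Omega)$, which follows from the continuous embedding $H^1(\Omega)\hookrightarrow L^4(\Omega)$ in dimension two and boundedness of $W$ in $H^1(\Omega)$ (compactness even gives boundedness for free). I do not anticipate a genuine obstacle here; the estimate is a one-line Hölder inequality once the embedding is invoked, and the cited Lemma 3.1 of \cite{sup-L^{1}} presumably proceeds along exactly these lines, which is why the authors say "similar to the proof of Lemma 3.1 in \cite{sup-L^{1}}."
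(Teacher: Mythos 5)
Your proof is correct, and it is worth noting that the paper itself gives no proof of this lemma at all --- it merely asserts it is ``similar to the proof of Lemma 3.1 in \cite{sup-L^{1}}'' (Gutman). Your Cauchy--Schwarz computation is sound: writing $|\delta q|\,|w|^2=|\delta q|^{1/2}\cdot|\delta q|^{1/2}|w|^2$ gives $\int_\Omega|\delta q||w|^2\,dx\le \|\delta q\|_{L^1}^{1/2}\bigl(k_1\|w\|_{L^4}^4\bigr)^{1/2}$, and since $\Omega\subset\mathbb{R}^2$ is bounded Lipschitz, $H^1(\Omega)\hookrightarrow L^4(\Omega)$ continuously, so $M=\sup_{w\in W}\|w\|_{L^4}^2<\infty$ follows from boundedness of $W$ alone. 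This route differs from the Gutman-style argument the authors are pointing to, which uses compactness essentially: one takes a finite $\eta$-net $\{w_i\}$ of $W$ in $L^2(\Omega)$, bounds $\int|\delta q||w|^2\le 2k_1\eta^2+2\int|\delta q||w_i|^2$, and handles each fixed $w_i$ by splitting $\Omega$ over $\{|\delta q|>\vp\}$ (whose measure is at most $\|\delta q\|_{L^1}/\vp$) and using absolute continuity of $\int_E|w_i|^2$. The trade-off is clear: your argument is shorter, needs only boundedness of $W$ rather than compactness, and yields an explicit rate $O(\|\delta q\|_{L^1}^{1/2})$, but it is tied to the dimension (it needs $H^1\hookrightarrow L^4$, i.e.\ $n\le 4$), whereas the $\eta$-net argument works in any dimension and genuinely uses the compactness hypothesis as stated. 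For the two-dimensional setting of this paper your proof is a complete and arguably cleaner substitute.
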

The following theorem shows that the map $\tilde{F}$ is continuous.
\begin{thm}\label{F-conti}
The map $\tilde{F}$ from $Q_{AD}$ to
$H^{1}(\Omega)$ is continuous.
\end{thm}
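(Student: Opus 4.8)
The plan is to show that if $q_n \to q$ in $L^\infty(\Omega)$ (equivalently in $L^1(\Omega)$ since $\Omega$ is bounded and the sequence is uniformly bounded), then $\tilde{F}(q_n) \to \tilde{F}(q)$ in $H^1(\Omega)$. The natural starting point is the identity $\tilde{F}(q) = A^{-1}(q)(\Upsilon_{\partial\Omega}^D)^*\tilde\Lambda(q)g$ together with the resolvent-type identity
\[
\tilde{F}(q_n) - \tilde{F}(q) = A^{-1}(q_n)\bigl(A(q) - A(q_n)\bigr)A^{-1}(q)(\Upsilon_{\partial\Omega}^D)^*\tilde\Lambda(q)g = A^{-1}(q_n)\bigl(A(q) - A(q_n)\bigr)\tilde{F}(q).
\]
Using the uniform bound $\|A^{-1}(q_n)\|_{L((H^1)^*,H^1)} \le C_3$ (valid since all $q_n \in Q_{AD}$, and $C_3$ depends only on $k_2$ and $C_0$), it suffices to estimate $\|(A(q)-A(q_n))\tilde{F}(q)\|_{(H^1(\Omega))^*}$ and show it tends to $0$.

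The key observation is that the difference operator acts as multiplication by $q - q_n$: for all $\tilde w \in H^1(\Omega)$,
\[
\bigl((A(q)-A(q_n))\tilde{F}(q),\tilde w\bigr) = \int_{\Omega}(q(x)-q_n(x))\,\tilde{F}(q)(x)\,\tilde w(x)\,dx.
\]
Hence $\|(A(q)-A(q_n))\tilde{F}(q)\|_{(H^1)^*} = \sup_{\|\tilde w\|_{H^1}\le 1}\bigl|\int_\Omega (q-q_n)\tilde{F}(q)\tilde w\bigr|$. Here I would invoke Lemma~\ref{lap-lemma1}: take $W$ to be (a dilate of) the image under the compact embedding $H^1(\Omega)\hookrightarrow L^2(\Omega)$ of the unit ball, intersected with the fixed element $\tilde{F}(q)$; more precisely, apply Cauchy--Schwarz to write $|\int_\Omega (q-q_n)\tilde{F}(q)\tilde w| \le (\int_\Omega |q-q_n||\tilde{F}(q)|^2)^{1/2}(\int_\Omega |q-q_n||\tilde w|^2)^{1/2}$, bound the second factor by $\|q-q_n\|_{L^\infty}^{1/2}\|\tilde w\|_{L^2} \le k_1^{1/2}$ uniformly, and for the first factor note that $\{\tilde{F}(q)\}$ — or rather we want a compact set, so simply bound $\int_\Omega |q-q_n||\tilde{F}(q)|^2 \le \|q-q_n\|_{L^\infty}\|\tilde{F}(q)\|_{L^2}^2 \to 0$. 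Then $\|\tilde{F}(q_n)-\tilde{F}(q)\|_{H^1} \le C_3 \cdot k_1^{1/2}\bigl(\|q-q_n\|_{L^\infty}\|\tilde{F}(q)\|_{L^2}^2\bigr)^{1/2} \to 0$, which gives continuity.

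The main obstacle is a bookkeeping one rather than a conceptual one: making sure the constants $C_2$, $C_3$ appearing in the coercivity and boundedness estimates are genuinely uniform over the sequence, which holds because they depend only on $k_1$, $k_2$, and the fixed value $C_0 = s^\alpha$ for the fixed $s > C_1$ — so the argument is pointwise in $s$ and no uniformity in $s$ is claimed here. A secondary subtlety is that $\tilde\Lambda(q)g$ itself depends on $q$; the resolvent identity above is precisely what lets us sidestep differentiating that dependence, since we rewrote everything in terms of the single fixed function $\tilde{F}(q)$ and the operator difference. If one instead wants the full strength of Lemma~\ref{lap-lemma1} (only $L^1$ convergence of $q_n$), one replaces the crude $L^\infty$ bound on the first factor by applying the lemma with $W = \overline{\{\tilde{F}(q)\}}$, a trivially compact singleton, so that $\sup_{w\in W}\int_\Omega |q-q_n||w|^2 \to 0$ as $\|q-q_n\|_{L^1}\to 0$; this yields continuity of $\tilde F$ with respect to the $L^1$ topology, which is the sharper statement and the one consistent with the reference to Lemma~3.1 in \cite{sup-L^{1}}.
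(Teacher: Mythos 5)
Your handling of the operator part is essentially the paper's own argument rewritten in operator notation: the difference $A(q)-A(q_n)$ acts as multiplication by $q-q_n$, the bound $\|A^{-1}(q_n)\|\leq C_3$ is uniform over $Q_{AD}$ for the fixed $s$, and the quantity $\int_\Omega|q-q_n|\,|\tilde F(q)|^2\,dx$ is controlled either crudely through the $L^\infty$ bound or, as in the paper, through Lemma~\ref{lap-lemma1} applied to a compact set (the paper takes the image $R=\{A^{-1}(\bar q)w:\,w\in W\}$ of a compact $W\subset (H^{1}(\Omega))^*$; your singleton is the special case that is actually used). That portion is sound.

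The gap is the claimed identity $\tilde F(q_n)-\tilde F(q)=A^{-1}(q_n)\bigl(A(q)-A(q_n)\bigr)\tilde F(q)$. In the paper's formulation the solution satisfies $A(q)\tilde F(q)=(\Upsilon_{\partial\Omega}^{D})^*\tilde\Lambda(q)g$, and the right-hand side is the Neumann trace of the solution itself, hence it also depends on $q$. The resolvent computation gives $A^{-1}(q_n)\bigl(A(q)-A(q_n)\bigr)\tilde F(q)=A^{-1}(q_n)(\Upsilon_{\partial\Omega}^{D})^*\tilde\Lambda(q)g-\tilde F(q)$, and $A^{-1}(q_n)(\Upsilon_{\partial\Omega}^{D})^*\tilde\Lambda(q)g$ is \emph{not} $\tilde F(q_n)$: it is the solution with potential $q_n$ but the \emph{old} Neumann data $\tilde\Lambda(q)g$, whereas $\tilde F(q_n)$ carries the data $\tilde\Lambda(q_n)g$. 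The correct decomposition therefore contains the extra term $A^{-1}(q_n)(\Upsilon_{\partial\Omega}^{D})^*\bigl(\tilde\Lambda(q_n)-\tilde\Lambda(q)\bigr)g$, which you have dropped; you even flag the $q$-dependence of $\tilde\Lambda(q)g$ as a ``secondary subtlety'' but then assert that the resolvent identity disposes of it, which it does not. The paper closes exactly this term by invoking the continuity of $q\mapsto\tilde\Lambda(q)g$ from \cite{N.Mandache-2001}, combined with the uniform bound on $A^{-1}(q_n)$. To repair your argument, either add that step, or avoid the circular Neumann-trace formulation entirely by lifting the Dirichlet data: write $\tilde F(q)=\tilde\lambda(s)G+v_q$ with $G$ a fixed $H^1$ extension of $g$ and $v_q\in H^1_0(\Omega)$ solving a coercive problem whose data depend on $q$ only through multiplication by $q$; your resolvent argument then goes through verbatim for $v_q$.
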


\begin{proof}
Let $\bar{q} \in Q_{AD}$ be fixed and $W$ be a compact set of $(H^{1}(\Omega))^*$.
 Then for $\delta q \in\triangle Q_{AD}(\bar{q})$, let $q := \bar{q}+\delta q$ and for any $w \in W\subset (H^{1}(\Omega))^*$,
  we define
\[
\bar{u}_{w}:= A^{-1}(\bar{q})w,\ u_{w}:= A^{-1}(q)w, \ \delta u_{w}:=\bar{u}_{w}-u_{w}.
\]
By  the coerciveness of the bilinear form $a(\cdot, \cdot)$, we have
\begin{equation}
\begin{aligned}
1/C_{3}\|\delta u_{w}\|^{2}_{H^{1}(\Omega)}\leq& |a(\delta u_{w},\delta u_{w})|\\
=&|( \triangledown \delta u_{w},\triangledown \delta u_{w} )+((q(x)+C_{0})\delta u_{w},\delta u_{w})|\\
=&|(\triangledown \bar{u}_{w},\triangledown\delta u_{w})-(\triangledown u_{w},\triangledown\delta u_{w})+\\
&((q(x)+C_{0})\bar{u}_{w},\delta u_{w})-((q(x)+C_{0})u_{w},\delta u_{w})|\\
=&|(\triangledown \bar{u}_{w},\triangledown\delta u_{w})-(\triangledown u_{w},\triangledown\delta u_{w})+\\
&((\bar{q}(x)+C_{0})\bar{u}_{w},\delta u_{w})-((q(x)+C_{0})u_{w},\delta u_{w})+
(\delta q \bar{u}_{w},\delta u_{w})|\\
\leq &|(\delta q \bar{u}_{w},\delta u_{w})|.
\end{aligned}
\end{equation}

By the Cauchy-Schwarz inequality, we can obtain
\begin{equation}
\begin{aligned}
\|\delta u_{w}\|_{H^{1}(\Omega)}^{2}&\leq C_{3}|(\delta q \bar{u}_{w},\delta u_{w})|\\
&\leq C_{3} \|\delta q \bar{u}_{w}\|_{L^2(\Omega)} \|\delta u_{w}\|_{L^2(\Omega)}\\
&\leq C_{3} \|\delta q \bar{u}_{w}\|_{L^2(\Omega)} \|\delta u_{w}\|_{H^{1}(\Omega)}.
\end{aligned}
\end{equation}
Thus,
\begin{equation}
\begin{aligned}
\|\delta u_{w}\|_{H^{1}(\Omega)}&\leq C_{3}\|\delta q \bar{u}_{w}\|_{L^2(\Omega)}\\
&\leq C_{3}\big(\int_{\Omega}|\delta q|^{2}|\bar{u}_{w}|^{2}dx\big)^{\frac{1}{2}}\\
&\leq C_{3}\big(\int_{\Omega}k_{1}|\delta q||\bar{u}_{w}|^{2}dx\big)^{\frac{1}{2}}.
\end{aligned}
\end{equation}

Since $A^{-1}(\bar{q})$ is a continuous linear map from $(H^{1}(\Omega))^*$ to $H^{1}(\Omega)$ and $\bar{u}_{w}=A^{-1}(\bar{q})w$ with $\omega$ in the compact subset $W\subset (H^{1}(\Omega))^*$, we have that $R:=\{\bar{u}_{w}:w\in W\}$ is a compact subset of $H^{1}(\Omega)$. Hence, by Lemma \ref{lap-lemma1}, it follows that
\[
\|A^{-1}(\bar{q}+\delta q)w-A^{-1}(\bar{q})w\|_{H^{1}(\Omega)}\rightarrow0\ \ as \ \ \|\delta q\|_{L^{1}(\Omega)}\rightarrow0.
\]
Let $W$ be the singleton set $\{(\Upsilon_{\partial\Omega}^{D})^*\tilde{\Lambda}(\bar{q})g\}\subset (H^{1}(\Omega))^*$. Because of the continuity of the $\tilde{\Lambda}(\bar{q})$ with respect to $\bar{q}$ \cite{N.Mandache-2001}, the continuity of the map $\tilde{F}$ from $Q_{AD}$ to $H^{1}(\Omega)$ as $\|\delta q\|_{L^{\infty}(\Omega)}\rightarrow0$ can be obtained immediately.
\end{proof}


\section{The $L^{2}+BV$ regularization method}
We use $f$ and $f^{\delta}$ to represent the exact data and measurement data with noise, respectively,  such that
\[
\|f^{\delta}-f\|_{L^{2}(0,T;L^{2}(\Gamma))}\leq \delta,
\]
where $\delta$ is the noise level.

We assume that $N$ experiments have been conducted with boundary excitation $g_i\in H^{\frac{3}{2}}(\partial\Omega)$,
where the corresponding flux $f_i$ at $\Gamma$ is  measured. Thus, the goal  of realistic \emph{IP} is to find the coefficient $q$ such that $\mathcal{F}(q,g_i)=f_i$, $i=1,2,\ldots,N$, where $\mathcal{F}$ is the operator defined by
\begin{equation}\label{exact-inverse-operator}
\mathcal{F}(q,g_i)=\Upsilon_{\Gamma}^{N}F(q,g_i),
\end{equation}
where $\Upsilon_{\Gamma}^{N}$ and $F$ are  the Neumann trace operator and forward operator defined in section 2, respectively.

In order to recover the coefficient $q$ for different cases,
different regularization methods may be required. To present the regularization method,
 we introduce some function spaces.
The total variation of a function $g\in L^{1}(\Omega)$ is defined by \cite{CR.Vogel-2002}
\[
TV(g)=\sup_{\vec{v}\in\mathcal{V}}\int_{\Omega} g \text { div} \  \vec{v}\ dx,
\]
where the space of test functions
\[
\mathcal{V}=\{\vec{v}\in C_{0}^{1}(\Omega;\mathds{R}^{d})\ | \ |\vec{v}(x)|\leq1 \ \ \text{for all} \ x\in \Omega\}.
\]

The space of functions of bounded variation \cite{CR.Vogel-2002}, denoted by $BV(\Omega)$, consists of functions $g\in L^{1}(\Omega)$ for which
\[
\|g\|_{BV}:=\|g\|_{L^{1}(\Omega)}+TV(g)<\infty.
\]
Now we redefine the set  in (\ref{admissible-set}) by
\[
Q_{AD}:=\{q \in L^{\infty}(\Omega)\cap BV(\Omega), 1/k_{2}\leq q(x)\leq k_{1}\}.
\]

It is known that  $L^{2}$ regularization method is used to reconstruct  smooth functions,
and  $BV$ regularization method is used to reconstruct functions with jump discontinuities.
 Further, the multi-parameter regularization method $L^{2}+BV$  is used to recover the piecewise smooth functions.
For generalization, we consider  these regularization methods in a unified  variational form with different regularization parameters.
To this end, we define the following regularization functional
\begin{equation}\label{regular-functional}
J_{\beta,\gamma}(q;f^{\delta})=\frac{1}{2}\sum_{i=1}^N\|\mathcal{F}(q,g_i)-f_i^{\delta}\|_{L^{2}(0, T;L^{2}(\Gamma))}^{2}+\beta\|q\|_{L^{2}(\Omega)}^{2}
+\gamma\|q\|_{BV(\Omega)},
\end{equation}
where $\beta$, and $\gamma$ are regularization parameters.

Next, we present the result about the existence and stability of the regularized  variational problem.  Without loss of generality, we set $N=1$ for the following analysis, and denote $\mathcal{F}(q,g_i)$ by $\mathcal{F}(q)$.
\begin{thm}
The regularization functional $J_{\beta,\gamma}(q;f^{\delta})$ exists a minimizer in $Q_{AD}$ and the minimizers of $J_{\beta,\gamma}(q;f^{\delta})$ are stable with respect to the measurement data $f^{\delta}$, i.e., let every $q_{k}$ satisfing
\[
q_{k}=\min_{q\in Q_{AD}} J_{\beta,\gamma}(q;f_{k}),\ \  \text{where}\ \ f_{k}\xrightarrow{L^{2}(0,T;L^{2}(\Gamma))} f^{\delta} \ \ \text{as}\ \ k\rightarrow \infty,
\]
then $\{q_{k}\}$ has a subsequence $\{q_{k_{j}}\}$ such that
\[
q_{k_{j}} \xrightarrow{L^{1}(\Omega)} q^{*}\ \ \text{as}\ \ k\rightarrow \infty,
\]
where $q^{*}$ is a minimizer of $ J_{\beta,\gamma}(q;f^{\delta})$.
\end{thm}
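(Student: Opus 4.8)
The plan is to use the direct method of the calculus of variations for the existence part, and a diagonal/compactness argument for the stability part; in fact the existence claim is a special case of the stability claim (take $f_k \equiv f^\delta$), so I would prove the stability statement and remark that existence follows. First I would fix the minimizing data sequence $f_k \to f^\delta$ in $L^2(0,T;L^2(\Gamma))$ and the corresponding minimizers $q_k$ of $J_{\beta,\gamma}(\cdot;f_k)$ over $Q_{AD}$. The first step is to derive a uniform bound: since $q_k$ minimizes, $J_{\beta,\gamma}(q_k;f_k) \le J_{\beta,\gamma}(q;f_k)$ for any admissible test $q$ (e.g.\ a constant function in $Q_{AD}$), and because $f_k$ is convergent hence bounded, the right-hand side is bounded uniformly in $k$. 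Consequently $\beta\|q_k\|_{L^2(\Omega)}^2 + \gamma\|q_k\|_{BV(\Omega)} \le C$, so $\{q_k\}$ is bounded in $BV(\Omega)$ and in $L^2(\Omega)$; moreover the constraint $1/k_2 \le q_k \le k_1$ is built into $Q_{AD}$.

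\textbf{Compactness and lower semicontinuity.} The second step is to extract a convergent subsequence. The compact embedding $BV(\Omega) \hookrightarrow\hookrightarrow L^1(\Omega)$ (valid since $\partial\Omega$ is Lipschitz) gives a subsequence $q_{k_j} \to q^*$ strongly in $L^1(\Omega)$, and along a further subsequence pointwise a.e.; the pointwise bounds pass to the limit so $1/k_2 \le q^* \le k_1$ a.e., and the $L^\infty$ bound plus $L^1$ convergence yields $q_{k_j}\to q^*$ in every $L^p(\Omega)$, $p<\infty$, in particular weakly in $L^2$. Hence $q^* \in Q_{AD}$. The third step is lower semicontinuity of each term of $J$ along this subsequence: $\|\cdot\|_{L^2(\Omega)}^2$ is weakly lower semicontinuous, $\|\cdot\|_{BV(\Omega)}$ is lower semicontinuous with respect to $L^1$ convergence (standard property of total variation, since $TV$ is a supremum of continuous linear functionals), and for the fidelity term I use the continuity of the forward operator: by Theorem~\ref{F-conti} the map $\tilde F$ (and hence, composing with the inverse Laplace transform and the Neumann trace $\Upsilon_\Gamma^N$, the operator $\mathcal F$) is continuous from $Q_{AD}$ into $L^2(0,T;L^2(\Gamma))$ with respect to $L^\infty$ — or at least $L^1$ — convergence of the coefficient; therefore $\mathcal F(q_{k_j}) \to \mathcal F(q^*)$ in $L^2(0,T;L^2(\Gamma))$, and since $f_{k_j}\to f^\delta$ in the same space, $\|\mathcal F(q_{k_j})-f_{k_j}\| \to \|\mathcal F(q^*)-f^\delta\|$.

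\textbf{Conclusion and the main obstacle.} Combining, for any $q\in Q_{AD}$,
\[
J_{\beta,\gamma}(q^*;f^\delta) \le \liminf_{j\to\infty} J_{\beta,\gamma}(q_{k_j};f_{k_j}) \le \limsup_{j\to\infty} J_{\beta,\gamma}(q;f_{k_j}) = J_{\beta,\gamma}(q;f^\delta),
\]
where the last equality again uses $f_{k_j}\to f^\delta$; this shows $q^*$ minimizes $J_{\beta,\gamma}(\cdot;f^\delta)$, proving stability, and the existence statement is the case $f_k\equiv f^\delta$. The main obstacle I anticipate is making the continuity of $\mathcal F$ rigorous as a map into $L^2(0,T;L^2(\Gamma))$: Theorem~\ref{F-conti} is stated for the Laplace-transformed operator $\tilde F$ into $H^1(\Omega)$ for each fixed $s$, so one must argue that this transfers to the time-domain operator and survives composition with the Neumann trace $\Upsilon_\Gamma^N$ into $L^2(0,T;H^{1/2}(\Gamma))$ (hence into $L^2(0,T;L^2(\Gamma))$); this may require a uniform-in-$s$ estimate and a dominated convergence / inverse-Laplace argument, or an appeal to the regularity $F: Q_{AD}\to L^2(0,T;H^2(\Omega))$ asserted in Section~2 together with trace continuity. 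A secondary subtlety is that Lemma~\ref{lap-lemma1} and Theorem~\ref{F-conti} are phrased for $\delta q \ge 0$ (the one-sided increment set $\triangle Q_{AD}$), whereas $q_{k_j}-q^*$ need not be sign-definite; one handles this by splitting the increment into positive and negative parts, both of which are dominated in $L^\infty$ by $k_1$, or by re-running the Lax–Milgram estimate directly for a general $L^\infty$ perturbation, which goes through verbatim since only $\|\delta q\|_{L^\infty}$ and $\|\delta q\|_{L^1}$ enter the bounds.
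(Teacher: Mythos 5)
Your proof is correct in substance, but it takes a more self-contained route than the paper. The paper's own proof is essentially a one-line verification-and-citation: it asserts continuity of $\mathcal F:Q_{AD}\subseteq L^{1}(\Omega)\to L^{2}(0,T;L^{2}(\Gamma))$ from Theorem~\ref{F-conti}, notes the compact embedding $BV(\Omega)\hookrightarrow L^{1}(\Omega)$ together with convexity and lower semicontinuity of the penalty, and then invokes Theorems~3.1 and~3.2 of the abstract Tikhonov framework in \cite{Tikhonov-conver} to obtain existence and stability. You instead unfold exactly the direct-method argument that underlies those cited theorems: uniform coercivity bounds from minimality, $BV$--$L^{1}$ compactness, lower semicontinuity of each term, and the $\liminf$--$\limsup$ sandwich. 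What your version buys is transparency --- in particular, you make explicit the two points the paper silently assumes: (i) that the continuity established in Theorem~\ref{F-conti} for the Laplace-transformed operator $\tilde F$ into $H^{1}(\Omega)$ at a fixed $s$ actually transfers to the time-domain operator $\mathcal F$ into $L^{2}(0,T;L^{2}(\Gamma))$ after composing with the Neumann trace (this requires a uniform-in-$s$ estimate and an inverse-Laplace argument the paper never supplies), and (ii) that the perturbation $q_{k_j}-q^{*}$ need not lie in the one-sided set $\triangle Q_{AD}$ for which Lemma~\ref{lap-lemma1} is stated, which you repair by splitting into positive and negative parts. Both issues are genuine gaps shared by the paper's proof, and your proposed fixes are the right ones; note also that your coercivity step, like the paper's, implicitly needs $\gamma>0$ to extract the $BV$ bound that drives the compactness.
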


\begin{proof}
From Theorem \ref{F-conti}, we see that
\[
\mathcal{F}: Q_{AD}\subseteq L^{1}(\Omega)\mapsto L^{2}(0, T;L^{2}(\Gamma))
\]
is continuous.
Meanwhile, because of
 the compact embedding of $BV(\Omega)$ in $L^{1}(\Omega)$, strictly convex and lower semi-continuity of penalty term,  Theorem 3.1 and Theorem 3.2 in \cite{Tikhonov-conver} implies  the existence and stability of minimizer of $J_{\beta,\gamma}(q;f^{\delta})$.
\end{proof}

Using the techniques in \cite{Tikhonov-conver}, we can get the convergence result of the regularization method.
\begin{thm}
Assume that the sequence $\{\delta_{k}\}$ converges monotonically to $0$ and the corresponding measurement data $f_{k}^{\delta}:=f^{\delta_{k}}$
satisfies
$\|f_{k}^{\delta}-f\|_{L^{2}(0,T;L^{2}(\Gamma))}\leq \delta_{k}$.
Furthermore, suppose that the regularization parameter $\beta(\delta)$ and $\gamma(\delta)$  monotonically increase and satisfy
\[
\beta(\delta)\thickapprox\gamma(\delta),\ \  \beta(\delta)\rightarrow 0 \ \ \text{and}
\ \ \frac{\delta^{2}}{\beta(\delta)}\rightarrow 0 \ \ \text{as}\ \ \delta\rightarrow 0.
\]
Here, $\beta(\delta)\thickapprox\gamma(\delta)$ means $d_{0}\gamma(\delta)\leqslant\beta(\delta)\leqslant d_{1}\gamma(\delta)$ for some positive constant $d_{0}$ and $d_{1}$ independent of the parameters involved.
Setting $\beta_{k}=\beta(\delta_{k}),\gamma_{k}=\gamma(\delta_{k})$, then the corresponding minimizers
\[
q_{k}=\min_{q \in \bar{Q}_{AD}}J_{\beta_{k},\gamma_{k}}(q;f_{k}^{\delta}),
\]
has a subsequence, which converges to the solution of $\mathcal{F}(q)=f$.
\end{thm}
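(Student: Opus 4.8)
The plan is to run the classical convergence argument for Tikhonov-type regularization, adapted to the two-parameter $L^{2}+BV$ penalty, in the spirit of \cite{Tikhonov-conver}. Throughout I fix a solution $q^{\dagger}\in Q_{AD}$ of $\mathcal{F}(q)=f$ (its existence is the standing hypothesis; for the sharpest form of the conclusion one takes $q^{\dagger}$ to be a solution minimizing the relevant weighted norm, but this refinement is not needed for subsequential convergence to \emph{some} solution).

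\emph{Step 1 (a priori bounds from the minimizing inequality).} Since $q_{k}$ minimizes $J_{\beta_{k},\gamma_{k}}(\cdot;f_{k}^{\delta})$ and $\|\mathcal{F}(q^{\dagger})-f_{k}^{\delta}\|_{L^{2}(0,T;L^{2}(\Gamma))}=\|f-f_{k}^{\delta}\|_{L^{2}(0,T;L^{2}(\Gamma))}\le\delta_{k}$, comparing $q_{k}$ with $q^{\dagger}$ gives
\begin{equation}
\tfrac12\|\mathcal{F}(q_{k})-f_{k}^{\delta}\|_{L^{2}(0,T;L^{2}(\Gamma))}^{2}+\beta_{k}\|q_{k}\|_{L^{2}(\Omega)}^{2}+\gamma_{k}\|q_{k}\|_{BV(\Omega)}\le\tfrac12\delta_{k}^{2}+\beta_{k}\|q^{\dagger}\|_{L^{2}(\Omega)}^{2}+\gamma_{k}\|q^{\dagger}\|_{BV(\Omega)}.
\end{equation}
Dropping the two nonnegative penalty terms on the left shows $\|\mathcal{F}(q_{k})-f_{k}^{\delta}\|^{2}\le\delta_{k}^{2}+2\beta_{k}\|q^{\dagger}\|_{L^{2}}^{2}+2\gamma_{k}\|q^{\dagger}\|_{BV}\to0$, and together with $\|f_{k}^{\delta}-f\|\le\delta_{k}\to0$ this yields $\mathcal{F}(q_{k})\to f$ in $L^{2}(0,T;L^{2}(\Gamma))$. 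Dropping instead the residual term, dividing by $\gamma_{k}$ and using $\beta_{k}\le d_{1}\gamma_{k}$ gives
\begin{equation}
\|q_{k}\|_{BV(\Omega)}\le\frac{\delta_{k}^{2}}{2\gamma_{k}}+\frac{\beta_{k}}{\gamma_{k}}\|q^{\dagger}\|_{L^{2}(\Omega)}^{2}+\|q^{\dagger}\|_{BV(\Omega)}\le\frac{d_{1}\delta_{k}^{2}}{2\beta_{k}}+d_{1}\|q^{\dagger}\|_{L^{2}(\Omega)}^{2}+\|q^{\dagger}\|_{BV(\Omega)},
\end{equation}
so since $\delta_{k}^{2}/\beta_{k}\to0$ the sequence $\{q_{k}\}$ is bounded in $BV(\Omega)$ (and bounded in $L^{\infty}(\Omega)$ by the definition of $Q_{AD}$).

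\emph{Step 2 (compactness and passage to the limit).} By the compact embedding $BV(\Omega)\hookrightarrow\hookrightarrow L^{1}(\Omega)$ there is a subsequence $q_{k_{j}}\to q^{*}$ strongly in $L^{1}(\Omega)$; passing to a further subsequence $q_{k_{j}}\to q^{*}$ a.e., so the pointwise bounds $1/k_{2}\le q^{*}\le k_{1}$ persist a.e., and by lower semicontinuity of $\|\cdot\|_{BV}$ under $L^{1}$-convergence, $q^{*}\in Q_{AD}$. Then, by the continuity of $\mathcal{F}:Q_{AD}\subseteq L^{1}(\Omega)\to L^{2}(0,T;L^{2}(\Gamma))$ following from Theorem~\ref{F-conti} (the whole sequence stays in a fixed $L^{\infty}$-ball, so the continuity statement applies), $\mathcal{F}(q_{k_{j}})\to\mathcal{F}(q^{*})$; combined with $\mathcal{F}(q_{k_{j}})\to f$ from Step~1, uniqueness of limits gives $\mathcal{F}(q^{*})=f$, i.e.\ $q^{*}$ solves $\mathcal{F}(q)=f$. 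For the refined statement, pass to a further subsequence with $\beta_{k_{j}}/\gamma_{k_{j}}\to\rho\in[d_{0},d_{1}]$, divide the minimizing inequality by $\gamma_{k_{j}}$, and take $\liminf$ using lower semicontinuity of $\|\cdot\|_{BV}$ and of $\|\cdot\|_{L^{2}}$ (the latter since the uniform $L^{\infty}$ bound forces weak $L^{2}$ convergence of $q_{k_{j}}$ to $q^{*}$), to get $\rho\|q^{*}\|_{L^{2}}^{2}+\|q^{*}\|_{BV}\le\rho\|\tilde q\|_{L^{2}}^{2}+\|\tilde q\|_{BV}$ for every solution $\tilde q$; thus $q^{*}$ is an $(\rho\|\cdot\|_{L^{2}}^{2}+\|\cdot\|_{BV})$-minimizing solution.

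\emph{Main obstacle.} The residual estimate is routine; the real work is the second bound in Step~1, where the interplay of the two regularization parameters is essential: one must use $\beta\thickapprox\gamma$ together with $\delta^{2}/\beta\to0$ to obtain the uniform $BV$ bound that drives the compactness, and, for the refined conclusion, control the ratio $\beta_{k}/\gamma_{k}$ along a subsequence. A second, more technical point is that Theorem~\ref{F-conti} states continuity of the forward map with respect to $L^{\infty}$ perturbations while compactness only delivers $L^{1}$ (hence a.e.) convergence; this is reconciled by invoking the uniform $L^{\infty}$ bound on $\{q_{k}\}$ coming from $Q_{AD}$ and the fact that the proof of Theorem~\ref{F-conti} actually proceeds through $L^{1}$-smallness of $\delta q$ via Lemma~\ref{lap-lemma1}.
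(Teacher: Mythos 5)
Your proof is correct and follows essentially the same route as the paper: compare the minimizer with an exact solution of $\mathcal{F}(q)=f$, use $\beta\thickapprox\gamma$ together with $\delta^{2}/\beta\rightarrow0$ to obtain a uniform $BV$ bound, extract an $L^{1}(\Omega)$-convergent subsequence via the compact embedding of $BV(\Omega)$ into $L^{1}(\Omega)$, and pass to the limit. You are in fact more explicit than the paper at the final step, where the paper simply asserts that the $L^{1}$-limit satisfies $\mathcal{F}(q)=f$ without spelling out the appeal to the continuity of $\mathcal{F}$ (and the reconciliation of its $L^{\infty}$ versus $L^{1}$ topologies) or the lower semicontinuity needed to keep $q^{*}$ in $Q_{AD}$.
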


\begin{proof}
From the definition of $q_{k}$, it follows that
\begin{equation}\label{}
\begin{aligned}
&\frac{1}{2}\|\mathcal{F}(q_{k})-f_{k}^{\delta}\|^{2}_{L^{2}(0, T;L^{2}(\Gamma))}+\beta_{k}\|q_{k}\|_{L^{2}(\Omega)}^{2}
+\gamma_{k}\|q_{k}\|_{BV(\Omega)}\\
\leq&\frac{1}{2}\|\mathcal{F}(q)-f_{k}^{\delta}\|^{2}_{L^{2}(0, T;L^{2}(\Gamma))}+\beta_{k}\|q\|_{L^{2}(\Omega)}^{2}
+\gamma_{k}\|q\|_{BV(\Omega)}\\
\leq&\frac{1}{2}\delta_{k}^{2}+\beta_{k}\|q\|_{L^{2}(\Omega)}^{2}
+\gamma_{k}\|q\|_{BV(\Omega)}.
\end{aligned}
\end{equation}
This  implies
\[
\|\mathcal{F}(q_{k})-f_{k}^{\delta}\|_{L^{2}(0, T;L^{2}(\Gamma))}\rightarrow0 \ \ as \ \ k\rightarrow\infty,
\]
and
\[
\beta_{k}\|q_{k}\|_{L^{2}(\Omega)}^{2}+\gamma_{k}\|q_{k}\|_{BV(\Omega)}\leq
\frac{1}{2}\delta_{k}^{2}+\beta_{k}\|q\|_{L^{2}(\Omega)}^{2}
+\gamma_{k}\|q\|_{BV(\Omega)}.
\]
Since $\beta(\delta)\thickapprox\gamma(\delta)$, we get
\[
d_{0}\|q_{k}\|_{L^{2}(\Omega)}^{2}+\|q_{k}\|_{BV(\Omega)}\leq
\frac{1}{2}\frac{\delta_{k}^{2}}{\gamma_{k}}+d_{1}\|q\|_{L^{2}(\Omega)}^{2}+\|q\|_{BV(\Omega)},
\]
and then
\[
\lim_{k\rightarrow\infty}\sup\left(d_{0}\|q_{k}\|_{L^{2}(\Omega)}^{2}+
\|q_{k}\|_{BV(\Omega)}\right)\leq d_{1}\|q\|_{L^{2}(\Omega)}^{2}+\|q\|_{BV(\Omega)}.
\]
Thus, it follows that
\begin{equation}\label{}
\begin{aligned}
&\lim_{k\rightarrow\infty}\sup\left(\|\mathcal{F}(q_{k})-f_{k}^{\delta}\|
_{L^{2}(0, T;L^{2}(\Gamma))}^{2}+\beta_{1}\|q_{k}\|_{L^{2}(\Omega)}^{2}+
\gamma_{1}\|q_{k}\|_{BV(\Omega)}\right)\\
\leq&\lim_{k\rightarrow\infty}\sup\left(\|\mathcal{F}(q_{k})-f_{k}^{\delta}\|
_{L^{2}(0, T;L^{2}(\Gamma))}^{2}+\beta_{k}\|q_{k}\|_{L^{2}(\Omega)}^{2}+
\gamma_{k}\|q_{k}\|_{BV(\Omega)}\right)\\
&+\lim_{k\rightarrow\infty}\sup\left((\beta_{1}-\beta_{k})\|q_{k}\|_{L^{2}(\Omega)}^{2}+
(\gamma_{1}-\gamma_{k})\|q_{k}\|_{BV(\Omega)}\right)\\
\leq&\frac{\beta_{1}}{d_{0}}\lim_{k\rightarrow\infty}\sup\left(d_{0}\|q_{k}\|_{L^{2}(\Omega)}^{2}
+\|q_{k}\|_{BV(\Omega)}\right)\\
\leq&\frac{\beta_{1}}{d_{0}}\left(d_{1}\|q\|_{L^{2}(\Omega)}^{2}+\|q\|_{BV(\Omega)}\right)\\
<&\infty.
\end{aligned}
\end{equation}
Note that $BV(\Omega)$ is a relatively compact subset of $L^{1}(\Omega)$, then $q_{k}$ has a subsequence $\{q_{k_{j}}\}$ and $q_{k_{j}}\xrightarrow{L^{1}(\Omega)} q^{*}$, $q^{*}\in Q_{AD}$ satisfies (\ref{exact-inverse-operator}). The proof is completed.
\end{proof}

\begin{rem}
If we specify  the source condition and the appropriate regularization parameter selection strategy, such as discrepancy principle \cite{H.W.Engl-1996}, the convergence rate of regularization solution with respect to noise can also be obtained.
\end{rem}

\section{Solving the forward problem using mixed finite element method}\label{forward problem}
In the following, the mixed finite element method \cite{forward-mixedfem-2} is presented to solve the forward problem numerically.
Here $\Omega$ is a $(0,1)\times(0,1)$ domain and let $T=1$ be fixed.
To this end, we need to discretize the spatial space and temporal space.
Define $x_{i}=i\Delta x \ (i=0,1,\cdots,N_{1})$, $y_{j}=j\Delta y \ (j=0,1,\cdots,N_{2})$ and $t_{n}=n\Delta t \ (n=0,1,\cdots,M)$, where $\Delta x=1/N_{1}, \Delta y=1/N_{2}$, and $\Delta t=1/M$ are the space and time step sizes, respectively.
Let $\omega=-\nabla u$ in $\Omega$.  Then Eq.(\ref{model-fpde}) turns to
\begin{eqnarray}
\label{model-mixedfem}
\begin{cases}
\omega(x,t)+\nabla u(x,t)=0 \ \ \text{in} \ \  \Omega \times (0,T],\\
{}_0 D_t^\alpha u(x,t)+div(\omega(x,t))+q(x)u(x,t)=0  \ \ \text{in} \ \  \Omega \times (0,T], \\
u(x,0)=0  \ \ \text{in} \ \ \Omega,\\
u(x,t)=\lambda(t)g(x) \ \ \text{on} \ \  \partial \Omega \times (0,T].
\end{cases}
\end{eqnarray}
Now we multiply the first equation in (\ref{model-mixedfem}) by $v, \forall v\in H(div;\Omega)$. Integrating by parts and using the Dirichlet boundary condition for $u$, then we can obtain
\[
\int_{\Omega}\omega\cdot v-\int_{\Omega}div(v)\cdot u=-\int_{\partial\Omega}uv\cdot \nu, \ \ \forall v\in H(div;\Omega).
\]
Then multiplying the second equation in (\ref{model-mixedfem}) by $p, \forall p\in L^{2}(\Omega)$ and integrating it on $x$ over $\Omega$, we get
\[
\int_{\Omega}{}_0 D_t^\alpha u\cdot p+\int_{\Omega}div(\omega)\cdot p+\int_{\Omega}qu\cdot p=0, \ \ \forall p\in L^{2}(\Omega).
\]
Thus our goal is: find ${\omega,u}:[0,T]\mapsto H(div;\Omega)\times L^{2}(\Omega)$ such that
\begin{eqnarray}
\label{weakconti-mixed}
\begin{cases}
\int_{\Omega}\omega\cdot v-\int_{\Omega}div(v)\cdot u=-\int_{\partial\Omega}uv\cdot \nu, \ \ \forall v\in H(div;\Omega),\\
\int_{\Omega}{}_0 D_t^\alpha u\cdot p+\int_{\Omega}div(\omega)\cdot p+\int_{\Omega}qu\cdot p=0, \ \ \forall p\in L^{2}(\Omega).
\end{cases}
\end{eqnarray}
For function $u(x,t)$, denote $u^{n}=u^{n}(\cdot)=u(\cdot,t_{n})$.
For the Caputo derivative ${}_0 D_t^\alpha u $, we use the following approximation \cite{Lin_xu},
\begin{equation}\label{appro-caputo}
  {}_0 D_t^\alpha u^{n}=\frac{1}{(\Delta t)^{\alpha}\Gamma(2-\alpha)}\sum_{k=1}^{n}(u^{k}-u^{k-1})\times[(n+1-k)^{1-\alpha}-(n-k)^{1-\alpha}].
\end{equation}

Suppose  that $\omega_{h}^{n}=\sum_{i=1}^{I}\sigma_{i}^{n}\psi_{i}$ and $u_{h}^{n}=\sum_{k=1}^{J}\beta_{k}^{n}\phi_{k}$,where $\{\psi_{i}\}_{i=1}^I$ and $\{\phi_{k}\}_{k=1}^J$ are the pair of basis functions in a mixed FEM.
Let $v=\psi_{j},\ p=\phi_{l}$, then the discrete mixed problem can be written as: find $(\omega_{h},u_{h})$ such that
\[
\begin{cases}
\int_{\Omega}\left(\sum_{i=1}^{I}\sigma_{i}^{n}\psi_{i}\right)\cdot\psi_{j}
-\int_{\Omega}div(\psi_{j})\cdot\left(\sum_{k=1}^{J}\beta_{k}^{n}\phi_{k}\right)
=-\int_{\partial \Omega}u(x,t_{n})\cdot \left(\psi_{j}\cdot \nu \right),\\
\int_{\Omega}{}_0
D_t^\alpha\left(\sum_{k=1}^{J}\beta_{k}^{n}\phi_{k}\right)\cdot \phi_{l}
+\int_{\Omega}div\left(\sum_{i=1}^{I}\sigma_{i}^{n}\psi_{i}\right)\cdot \phi_{l}
+\int_{\Omega}q\phi_{l} \cdot\left(\sum_{k=1}^{J}\beta_{k}^{n}\phi_{k}\right)=0.
\end{cases}
\]
We can rewrite  above as the following matrix form,

\[
n=1, \ \
\begin{pmatrix}
A & B \\
sB^{T} & C+sD
\end{pmatrix}
\begin{pmatrix}
\sigma^{1} \\
\beta^{1}
\end{pmatrix}
=
\begin{pmatrix}
G(:,1) \\
b(1)\beta^{0}
\end{pmatrix}
;\]

\[
n\geq 2, \ \
\begin{pmatrix}
A & B \\
sB^{T} & C+sD
\end{pmatrix}
\begin{pmatrix}
\sigma^{n} \\
\beta^{n}
\end{pmatrix}
=
\begin{pmatrix}
G(:,n) \\
c_{1}\beta^{n-1}+c_{2}\beta^{n-2}+\cdots+c_{n-1}\beta^{1}+b(n)\beta^{0}
\end{pmatrix}
.\]
Here
\[
\begin{aligned}
&s=(\Delta t)^{\alpha}\Gamma(2-\alpha),\ \ b_{n}=n^{1-\alpha}-(n-1)^{1-\alpha},\ \ 1\leq n \leq M,\\
&c_{k}=2k^{1-\alpha}-(k+1)^{1-\alpha}-(k-1)^{1-\alpha},\ \ 1\leq k \leq M-1,\\
\end{aligned}
\]
and
\[
\begin{aligned}
& A\in \mathds{R}^{I\times I}, A_{ij}=-\int_{\Omega}\psi_{i} \cdot \psi_{j},\ \
B\in \mathds{R}^{I\times J}, B_{ji}=\int_{\Omega}div(\psi_{j})\phi_{i},\\
& G\in \mathds{R}^{I\times M}, G_{ij}=\int_{\partial \Omega}u(x,t_{j})(\psi_{i}\cdot \nu),\ \
C\in \mathds{R}^{J\times J}, C_{ml}=\int_{\Omega}\phi_{m}\phi_{l},\\
& D\in \mathds{R}^{J\times J}, D_{ij}=\int_{\Omega}q(x)\phi_{i}\phi_{j}.
\end{aligned}
\]

\section{Numerical results}
In this section, the L-M algorithm
\cite{A.Doicu-2010,M.Hanke-1997} is applied to recover the unknown function $q$ in Eq. (\ref{model-fpde}).
Since $q$ is a spatial function,  the dimension of $q$ usually depends on the spatial grid size if no priori
information is available.  Thus the dimension of $q$ may be very high if  the grid number is large.
To overcome the difficulty,  we parameterize the function using some priori information
such that the dimension of $q$ is much less than the grid size of the spatial discretization.
To this end, we choose a set of basis functions to represent the reaction coefficient $q$.
In other words, the basis functions act as some priori information for $q$.
The more information we know, the better inversion results we can obtain.
In Subsection \ref{1}, we use $L^2$ regularization method for recovering a smooth function.
In Subsection \ref{2}, the $BV$ regularization method is applied to recover a piecewise constant function.
In Subsection \ref{3}, the $L^2+BV$ penalty is used to reconstruct a piecewise smooth function.
Moreover, we choose different basis functions for these unknown parameters and use a vector $ a\in \mathds{R}^{n_{q}}$  to represent the iterative solution under dimensional reduction.
For simulation, we set
\[
\lambda(t)=t^2
\]
for Eq.(\ref{model-fpde}). We also choose the numerical differential step  $\tau=0.5$, convergence precision $eps=1\times 10^{-4}$
and the time step size $\Delta t=1/100$, respectively.  We use  the lowest Raviart-Thomas finite element method (a mixed FEM)
to solve the forward problem in the unit square $[0, 1]^2$.
The accuracy of the inversion  solution  is measured by the relative error
\[
\bm{\varepsilon}=\frac{\|q^{inv}-q_{true}\|_{L^2(\Omega)}}
{\|q_{true}\|_{L^2(\Omega)}},
\]
where $q_{true}$ and $q^{inv}$ represent the true and inversion solution, respectively.

The  data $f^{\delta}=[f_{1}^{\delta};\cdots;f_{N}^{\delta}]$ are generated by
\[
f_{i}^{\delta}=\mathcal{F}(q_{true},g_{i})|_{\partial\Omega}+\eta,\quad 1\leq i \leq N,
\]
where $\eta$ is the Gaussian random vector with the  distribution $N(0,\delta^2I)$.
The discretization of spatial space and temporal space are  the same as the forward problem in Section \ref{forward problem}.
The L-M algorithm is presented in Table \ref{algorithm} to reconstruct the unknown coefficient $q$ through solving the minimization problem of (\ref{regular-functional}).

\begin{table}[htbp]
\centering
 \caption{Levenberg-Marquardt algorithm}\label{algorithm}
 \begin{tabular}{l}
  \toprule
 \textbf{Input}: The regularization parameter $\beta,\gamma$,
  the noise level $\delta$ and
  the maximum \\iterations $R$.\\
  \textbf{Output}: $ a_{k+1}=(a^{1}_{k+1},...,a^{n_{q}}_{k+1})^{T}.$\\
  \midrule
  1.  $\bf{Initialize}$ the solution $a$;\\
  2.  Settle the forward problem and obtain the additional data
  $f^{\delta}$;

  \\
  3.  $\bf{While}$ $k<R$\\
  4.  Compute the forward problem and set $F_{k}=f^{\delta}-[\mathcal{F}(q_{k},g_{1});\cdots;\mathcal{F}(q_{k},g_{N})];$\\
  5.  Compute the Jacobian matrix $G_{0}$;\\
  6.  Compute
   $h_{k}=(G_{0}^{T}G_{0}+\beta I+\gamma(L_{1}+L_{2}))^{-1}(G_{0}^{T}F_{k})$,
   where $L_{1}$ and $L_{2}$ are discrete\\ forms of the gradient for the norm of $L^{1}$ and $TV$, respectively;\\
  7.  ${\bf Update}$ the solution
   $a_{k+1}=a_{k}+h_{k}$;\\
  8.  $k\leftarrow k+1$;\\
  9.  $\bf{Terminate}$ if $\|h_{k}\|<eps$;\\
  10.  $\bf{end}$\\
  \bottomrule
 \end{tabular}
 \end{table}

\begin{rem}
In this paper, the Jacobian matrix $G_{0}$ is approximated by a finite difference method, i.e., each column
of $G_{0}$ is computed by
\[
G_{0}(:,j)=\frac{\mathcal{F}(a+\tau\zeta_{j})-
\mathcal{F}(a)}{\tau}
\]
for $j=1,2,\cdots, n_{q}$, where $\tau$ is the step size and $\zeta_{j}$ is a vector of zeros with one in the j-th component. For $L_{1}$ and $L_{2}$, we can see \cite{CR.Vogel-2002}.
\end{rem}


\subsection{Inversion for smooth function}\label{1}
In this subsection, we take
\[
q(x,y)=\cos(\pi x)\sin(\pi y)+1.5
\]
as the true coefficient $q$ in Eq.(\ref{model-fpde}).

For numerical implementation, we  represent the coefficient $q$ in a finite-dimensional space. Then the \emph{IP} turns to search for a vector $q^{inv}\in \mathds{R}^{m}$ such that
\[
q(x,y)=\sum_{i=1}^{m}q^{inv}_{i}\xi_{i}(x,y),
\]
where $\{\xi_{i}\}_{i=1}^{i=m}$ is a set of hat functions and $m$ is the number of elements under discretization. However, the dimension of $q^{inv}$ will increase dramatically if the partition is fine. Assume a priori information we know is that $q$ is smooth. Thus Karhunen-Lo$\grave{e}$ve expansion (KLE) \cite{kle} approach can be  employed to reduce the dimension of $q^{inv}$.

Since $q>0$, we assume that $\log q(x,w)$ is a random filed which can be represented as
\[
\log q(x,\omega)=E[q(x,\omega)]+\sum_{i=1}^{\infty}\sqrt{\lambda_{i}}\varphi_{i}(x)Q_{i}(\omega),
\]
where $\lambda_{i}$ and $\varphi_{i}(x)$ are the eigen-pairs  such that
\[
\int_{\Omega}C(x;x')\varphi_{i}(x')=\lambda_{i}\varphi_{i}(x), \ \
E(Q_{i})=0, \ \
E(Q_{i}(\omega)Q_{j}(\omega))=\delta_{ij}.
\]
In particular, we consider the covariance function
\[
C(x,y;x',y')=\rho^2\exp\left(-\frac{|x-x'|^2}{2l_{1}^2}-\frac{|y-y'|^2}{2l_{2}^2}\right),\ \ (x,y)\in \Omega,
\]
where $\rho^2=0.01$, $l_{1}=l_{2}=0.3$.
For numerical simulation, we use the truncated expansion with the first $n_{q}$-terms
as
\[
\log q(x,\omega)\approx E[q(x,\omega)]+\sum_{i=1}^{n_{q}}\sqrt{\lambda_{i}}\varphi_{i}(x)Q_{i}(\omega),
\]
for $n_{q}\ll m$,
where the choice of $n_{q}$  depends  on the decay rate of the eigenvalues $\lambda_{i}$.
If we choose $E[q(x,\omega)]=0$, we can rewrite $q^{inv}$ in a matrix form as
\[
q^{inv}=\exp(Ha).
\]
]
Here, $a \in\mathds{R}^{n_{q}\times 1}$ and $H\in\mathds{R}^{m\times n_{q}}$, which is defined as
\[
H=[\sqrt{\lambda_{1}}\varphi_{1},
\sqrt{\lambda_{2}}\varphi_{2},\cdots,
\sqrt{\lambda_{n_{q}}}\varphi_{n_{q}}].
\]
To capture 95$\%$ of the energy of the log-normal process, we retain $n_{q}=8$ Karhunen-Lo$\grave{e}$ve modes.
Then, we only need to recover the vector $a$ in a low dimension space.
The forward problem is solved on a uniform $20\times 20$ grid through the  mixed finite element method.
Since $q$ is smooth, $L^{2}$ penalty is used for the example. That is, we set $\gamma=0$ in (\ref{regular-functional}).

By  Lemma \ref{lem-uniqueness}, all possible  $g\in H^{\frac{3}{2}}(\partial\Omega)$ should be included in the Cauchy data to ensure  the uniqueness of the \emph{IP}.
 To check  the effect of data on the inversion solution, we use different number of basis functions in $H^{\frac{3}{2}}(\partial\Omega)$ to generate Cauchy data.
 To this end, we set the boundary function $g(x,y)$ in the form
 \[
g(x, y)=\sin(k_{1}\pi x)\cos(k_{2}\pi y),
\]
where $(k_{1},k_{2})$ is chosen from the following set
\[
H=\big\{(1,1),(1,2),(2,1),(2,2),(1,3)\big\}.
\]
We choose  the first $N=1, 3, 5$  basis functions to generate Cauchy data  for numerical  simulation.
We note that $N$ is the number of experiments for measurement.
The measurement data are taken at time $t=[61, 71, 81, 91, 101]\cdot\Delta t$.
The relative $L^2$ errors versus iterative steps in L-M algorithm  are plotted in
Fig \ref{relative-data} when different numbers of measurement experiments  are used, i.e., $N=1,3,5$. By the figure,
as the number of measurement experiments  increases, the convergence becomes faster.
We can also see that the effect of increasing  data becomes very small when the measurement experiment  number $N=3$. This implies  that the  data is saturated for $N=3$ in this case.

In practical simulation,  we often use few measurement experiments  for inversion to reduce the cost of measurements and computation.
We will choose the experiment  number  $N=1$ to identify the unknown $q$ in the following  numerical computation.

\begin{figure}
\centering
\includegraphics[width=0.50\textwidth, height=0.40\textwidth]{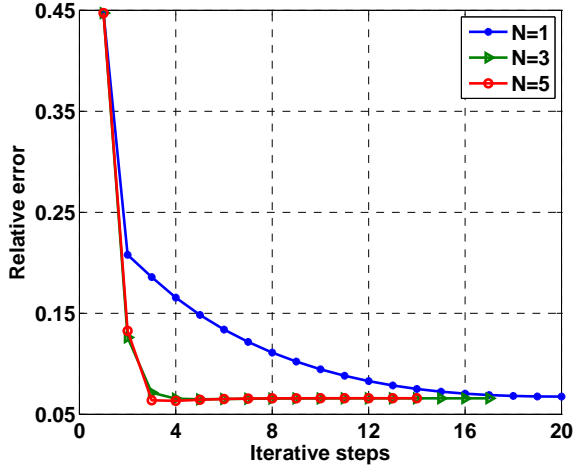}
\caption{Relative errors versus iterative steps in Section 6.1. }\label{relative-data}
\end{figure}

In the case of regularization parameter $\beta=5e-4$, the true solution and the corresponding inversion solution with the noise level $\delta=1\%$ are plotted in Fig \ref{fig-sin-solution}.
From the figure, we can find the inversion solution profile matches   the real solution profile  well. This
 shows that $L^2$ regularization method is suitable  for the smooth function and the choice of basis functions is proper for this example. Under the same conditions, Fig \ref{fig-sin-yfixed} shows the cross sectional drawings of the true and the inversion solutions with $y=0.1$, $y=0.5$, $y=0.9$. It is  clear that the inversion solution approximate the exact solution well with $y=0.1$, $y=0.9$ except for the location where $x$ is near to $0$.
The estimate for $y=0.5$ is also acceptable.
The first plot in Fig \ref{fig-sin-convergence} shows the relative error versus the
iteration number. By the plot, we see
 that the relative error $\bm{\varepsilon}$ decays  as the number of iterations increases.
This shows  that the algorithm is efficient and the choice of $\beta$ is effective  for the example.
To access the impact  of  noise on the  inversion,   we conduct $20$
 experiments under three different noise level $\delta$, and the results are depicted  in the second plot in Fig \ref{fig-sin-convergence}.
It demonstrates  that the relative error becomes more oscillation as  the noise level $\delta$ increases.

\begin{figure}
\centering
\subfigure[]{
\includegraphics[width=0.31\textwidth, height=0.36\textwidth]{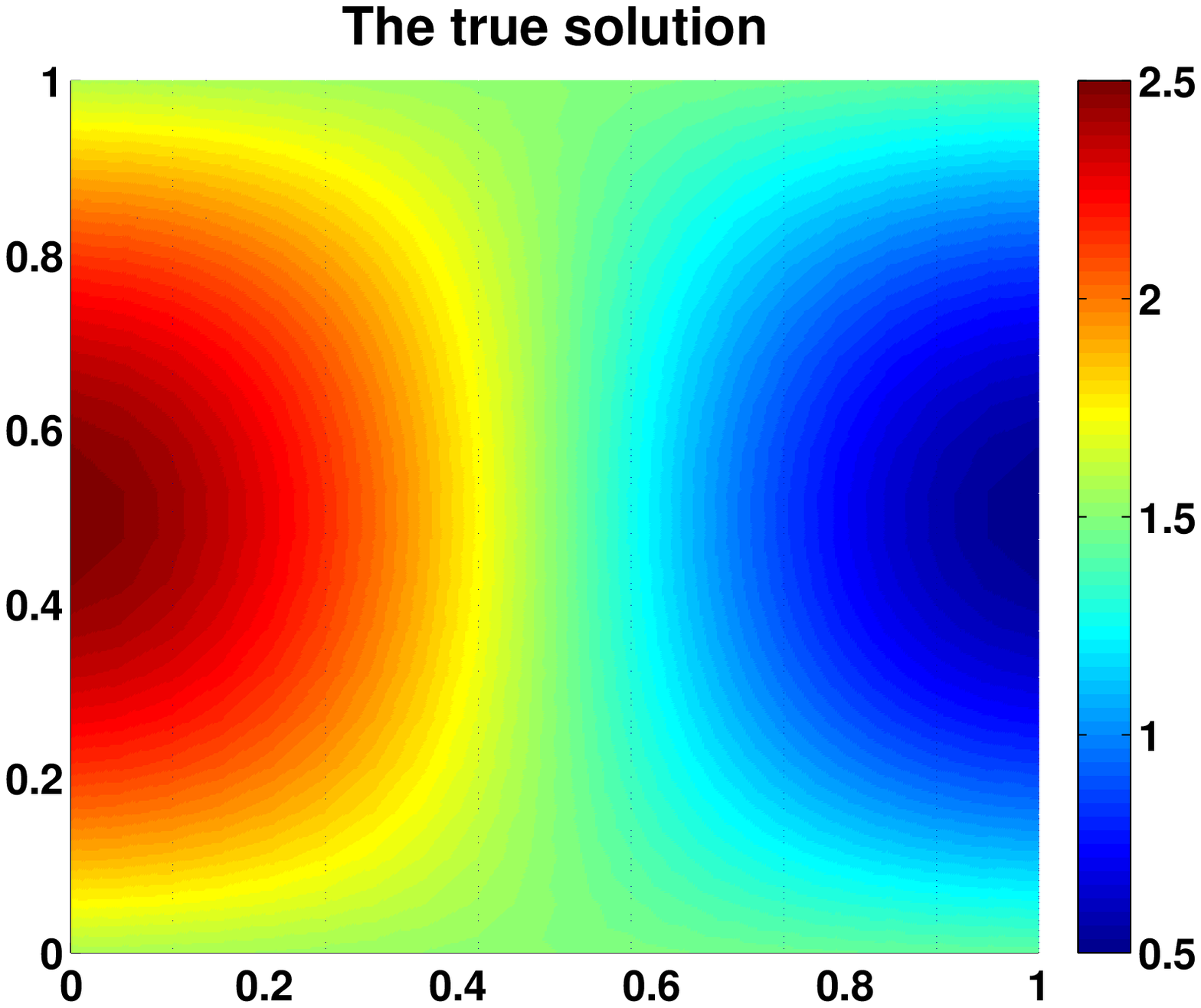}
}
\subfigure[]{
\includegraphics[width=0.31\textwidth, height=0.36\textwidth]{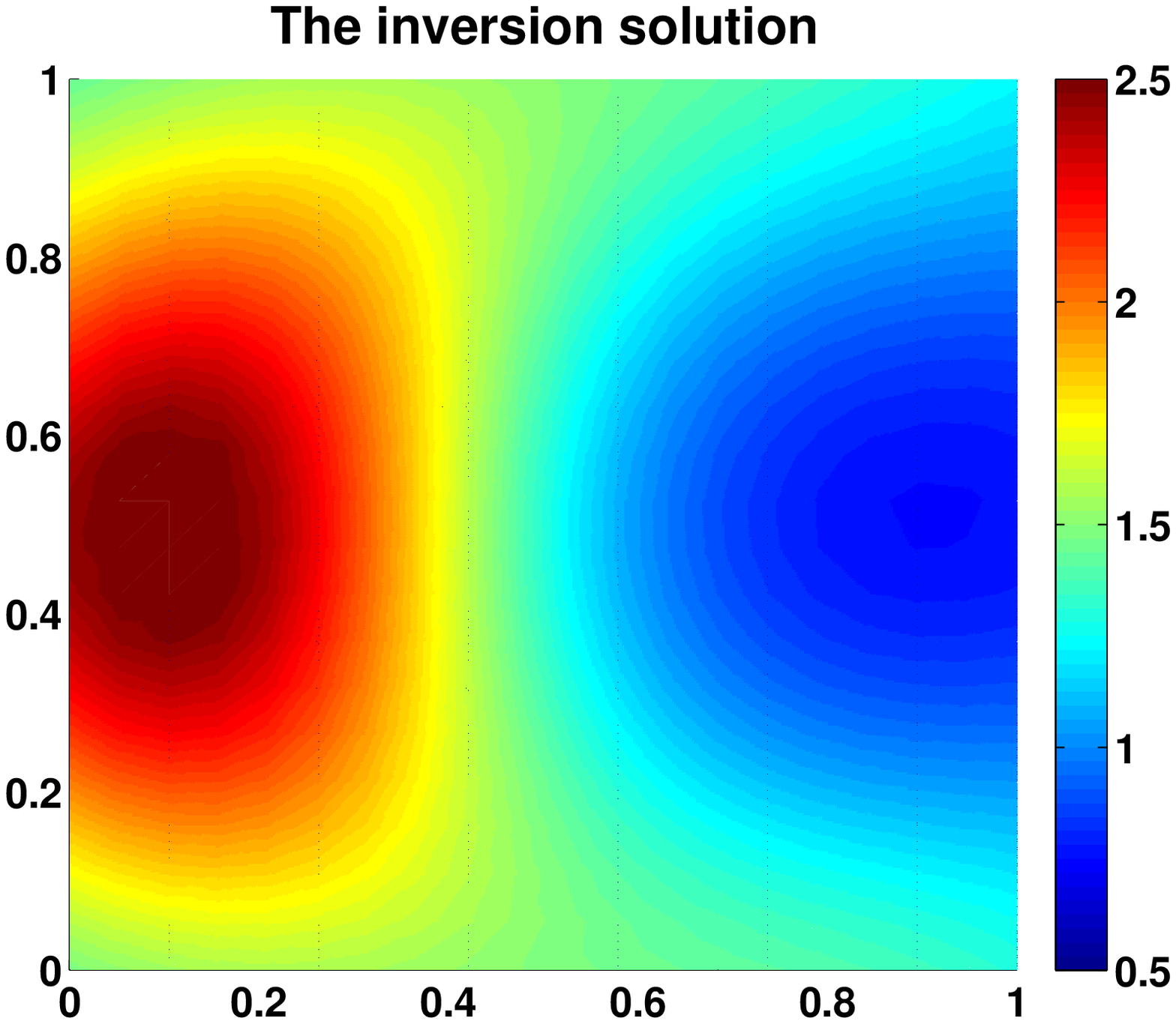}
}
\caption{The true solution and inversion solution in Section 6.1.}\label{fig-sin-solution}
\end{figure}

\begin{figure}
\centering
\subfigure[]{
\includegraphics[width=0.28\textwidth, height=0.28\textwidth]{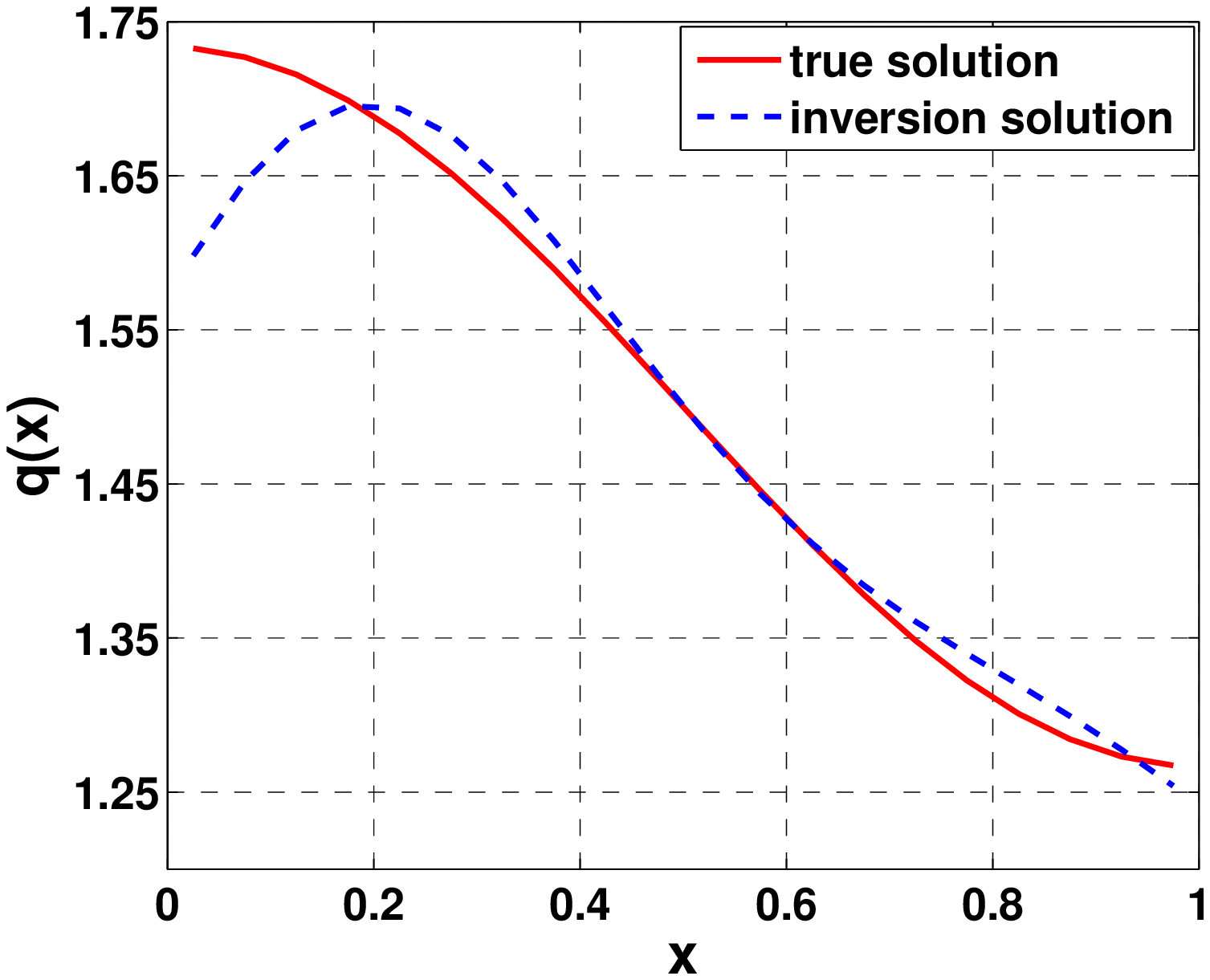}
}
\subfigure[]{
\includegraphics[width=0.28\textwidth, height=0.28\textwidth]{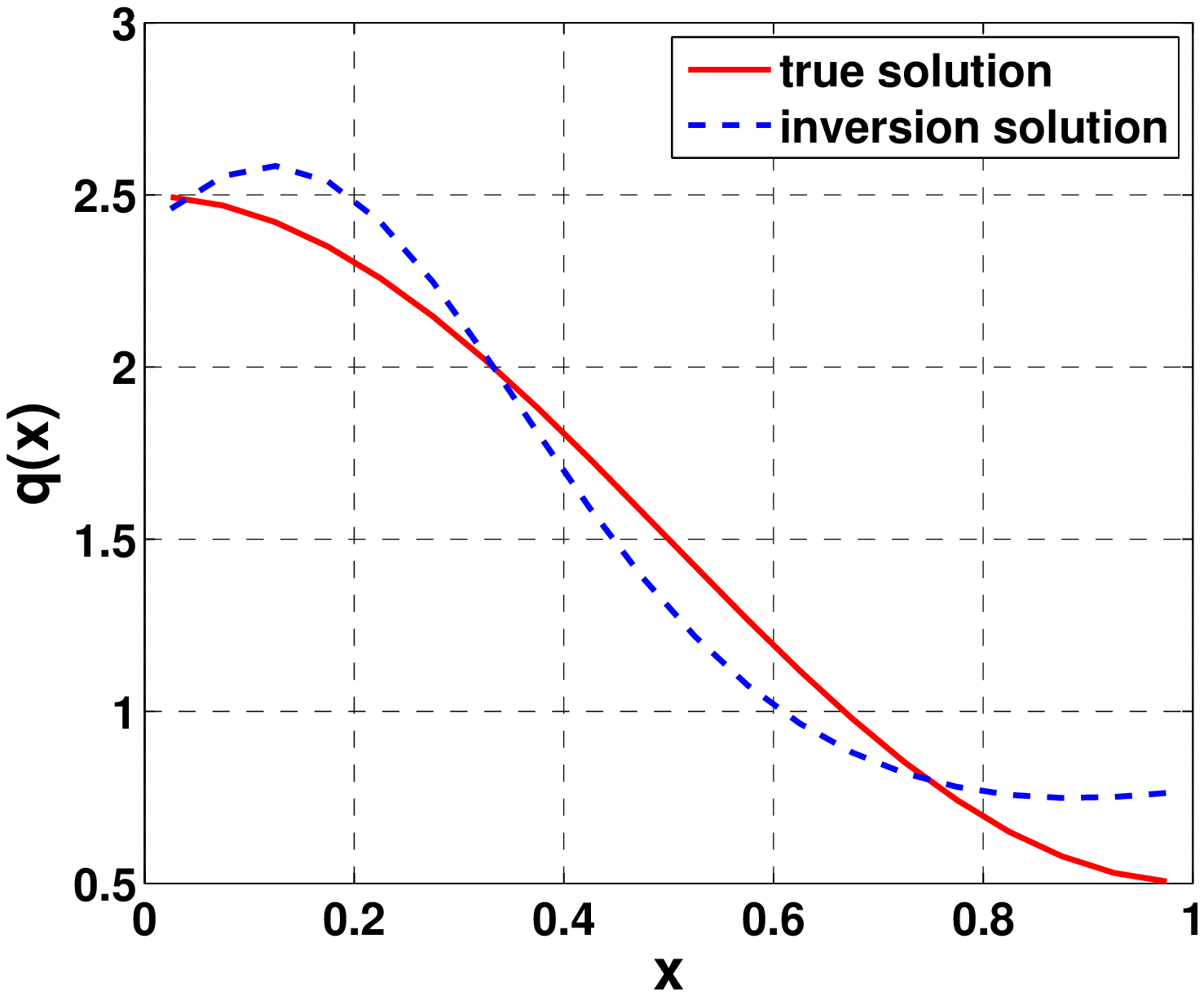}
}
\subfigure[]{
\includegraphics[width=0.28\textwidth, height=0.28\textwidth]{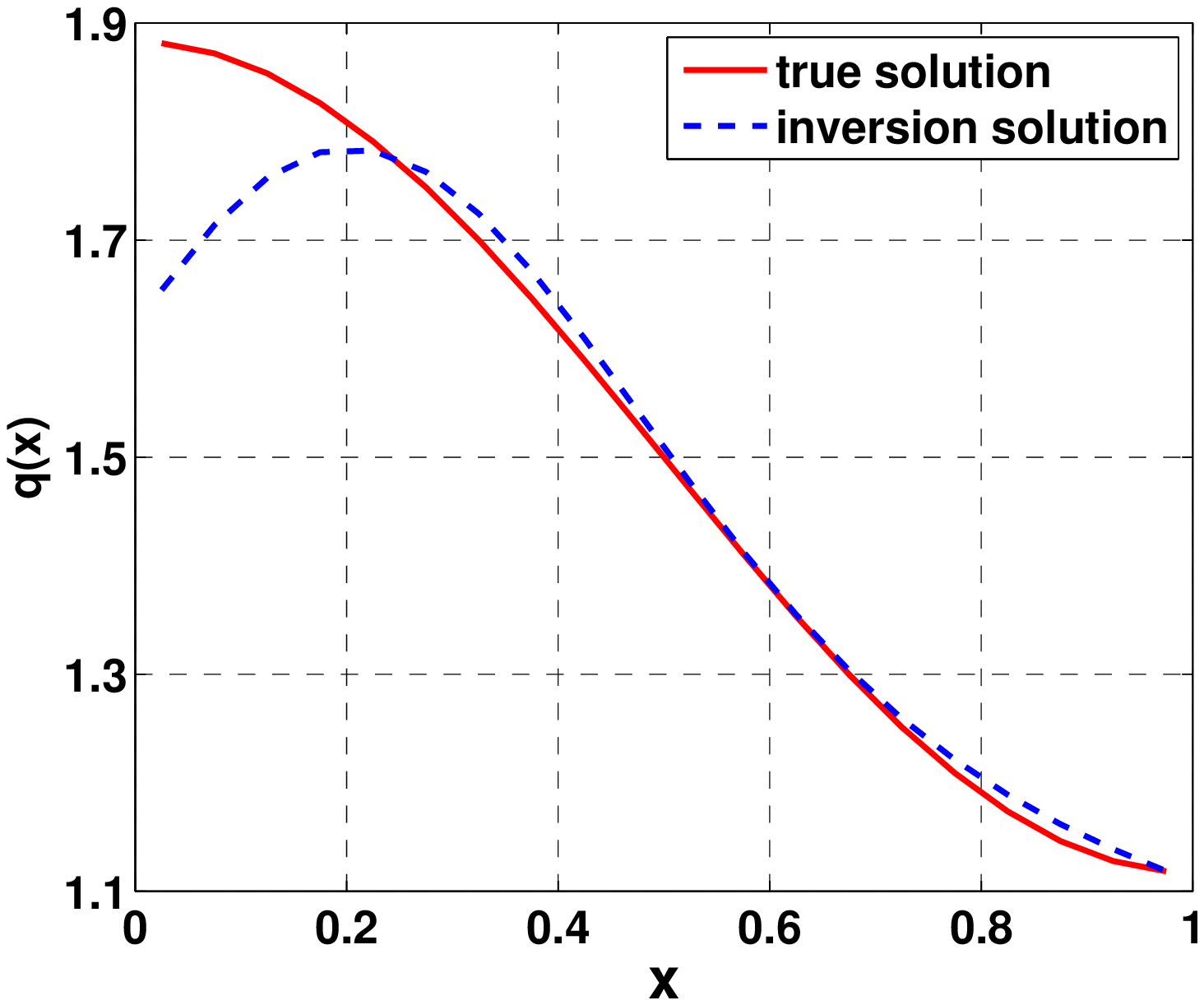}
}
\caption{The true solution and inversion solution with $y=0.1$, $ 0.5$, $0.9$ in Section 6.1}\label{fig-sin-yfixed}
\end{figure}

\begin{figure}
\centering
\subfigure[]{
\includegraphics[width=0.31\textwidth, height=0.36\textwidth]{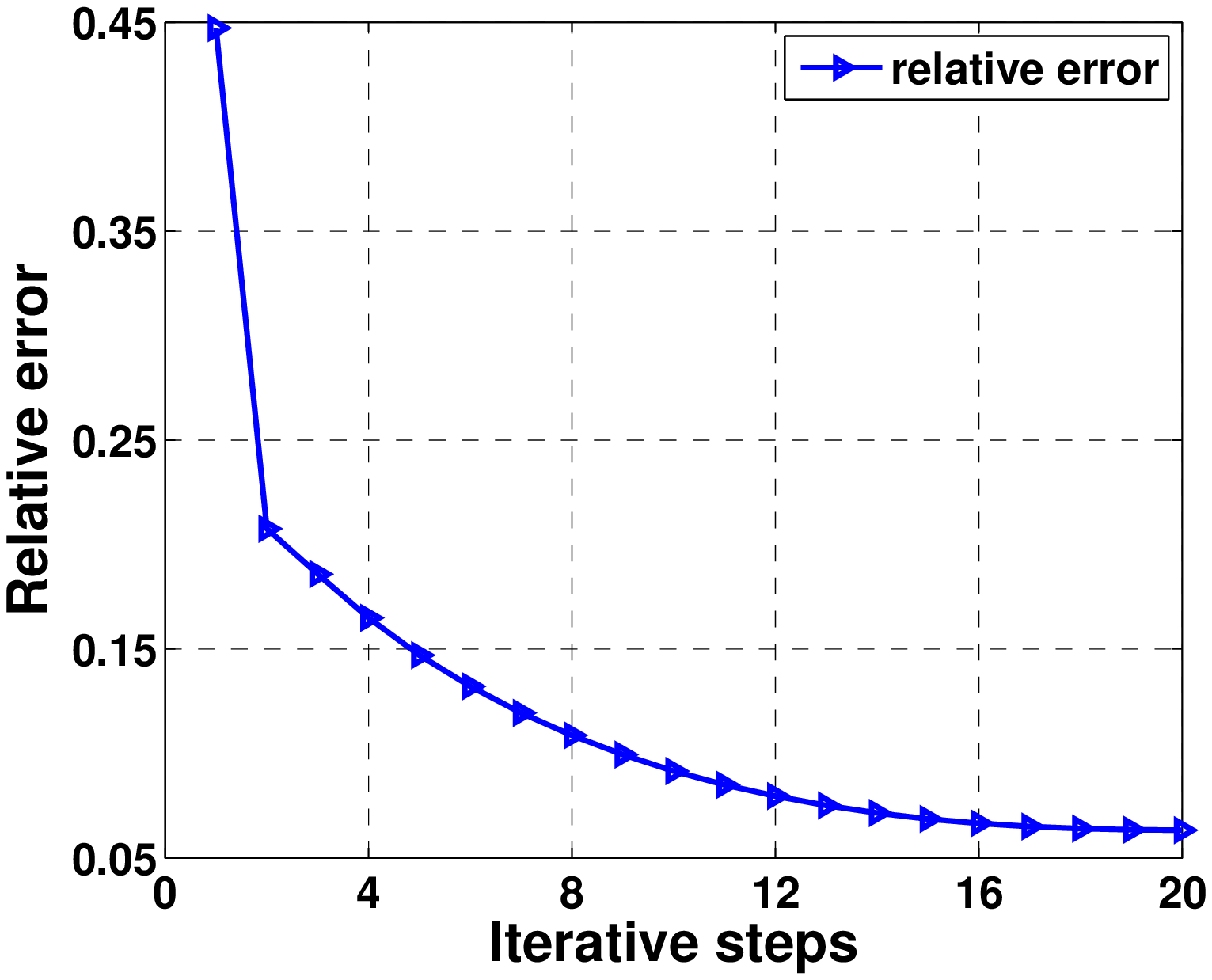}
}
\subfigure[]{
\includegraphics[width=0.31\textwidth, height=0.36\textwidth]{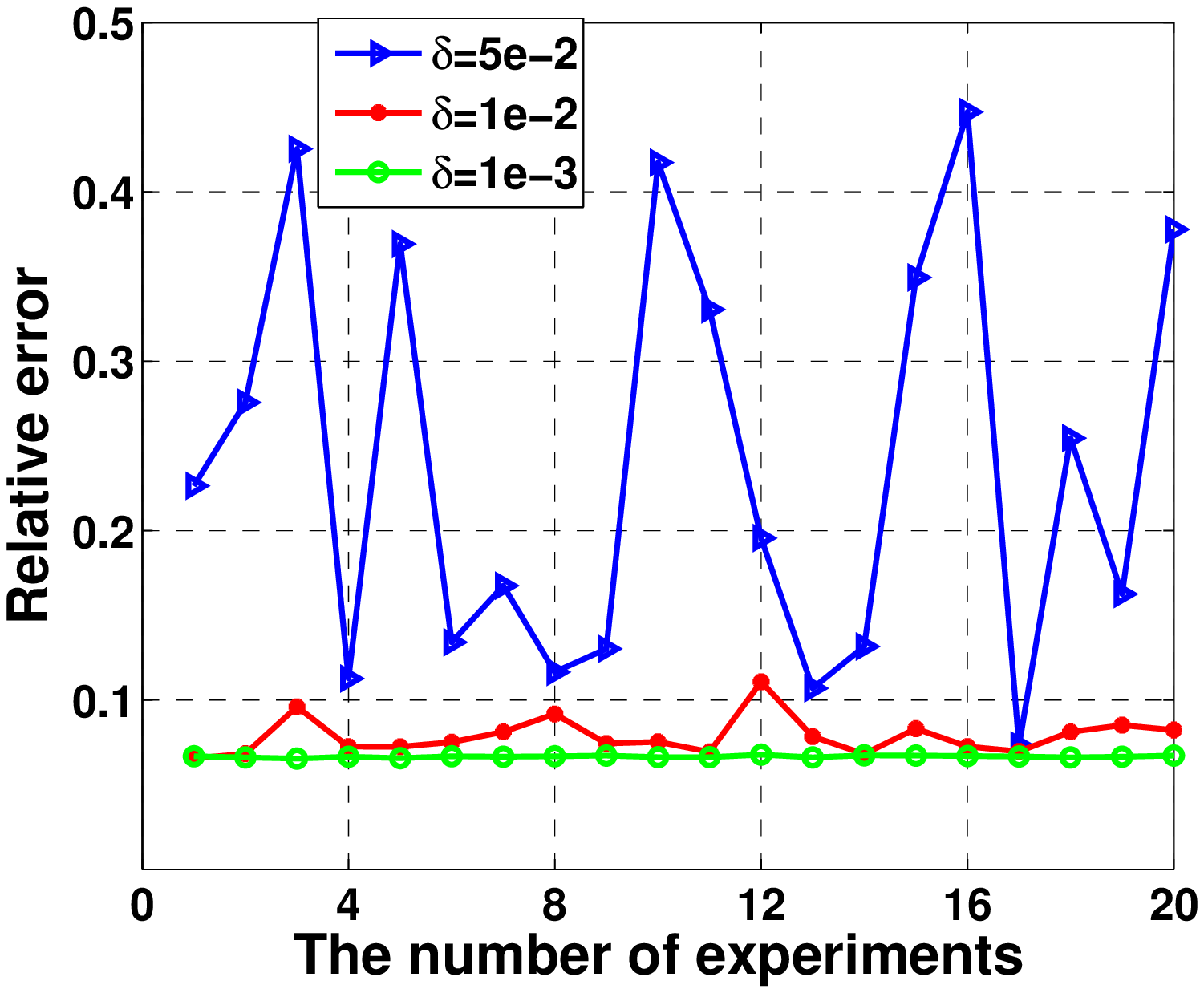}
}
\caption{(a): Relative errors versus iterative steps in Section 6.1, (b): relative errors under 20 experiments with different noise levels in Section 6.1. }\label{fig-sin-convergence}
\end{figure}


\subsection{Inversion for function with jump discontinuities}\label{2}
In this subsection, we consider  the case when $q$ is piecewise constant with jump discontinuities.
We take
\[q(x,y)=\left\{\begin{array}{ll}
10,&\text{$0\leq x \leq 2/3,\
0\leq y \leq 1/3$},\\
1,& else.
\end{array}\right.\]
as the true coefficient $q$.
Let $\Omega$ consist of  $3\times3$ uniform sub-regions and  the value of $q$ on each sub-region be a constant. Then the dimension of $a$ is nine. The $BV$ regularization is used for the example due to the jump discontinuity  of $q$, i.e.,  $\beta=0$ is fixed.

We solve the forward problem on a uniform $18\times 18$ grid. The measurement data are taken at time $t=(1+[60;70;80;90;100])\cdot\Delta t$.
In the case of $\alpha=0.4$ and $\gamma=5e-3$, the true solution and the inversion solution are plotted in Fig \ref{fig-Bv-solution}.
From the figure, we find the inversion result approximate the true solution well.
It shows that the function with jump discontinuities can
 be effectively reconstructed by the $BV$ regularization method.
The left plot in Fig \ref{fig-Bv-relative} illustrates the relative error $\bm{\varepsilon}$ versus the iteration number.   We can see that the algorithm converges rapidly during the first five iterations and then
gives a steady approximation with the relative error $\bm{\varepsilon}=0.0505$.
The right plot in Fig \ref{fig-Bv-relative} shows the relative error $\bm{\varepsilon}$ for different  noise levels
 for  $20$ experiments. We can find the relative error becomes small  as the noise level decreases.

\begin{figure}
\centering
\subfigure[]{
\includegraphics[width=0.31\textwidth, height=0.36\textwidth]{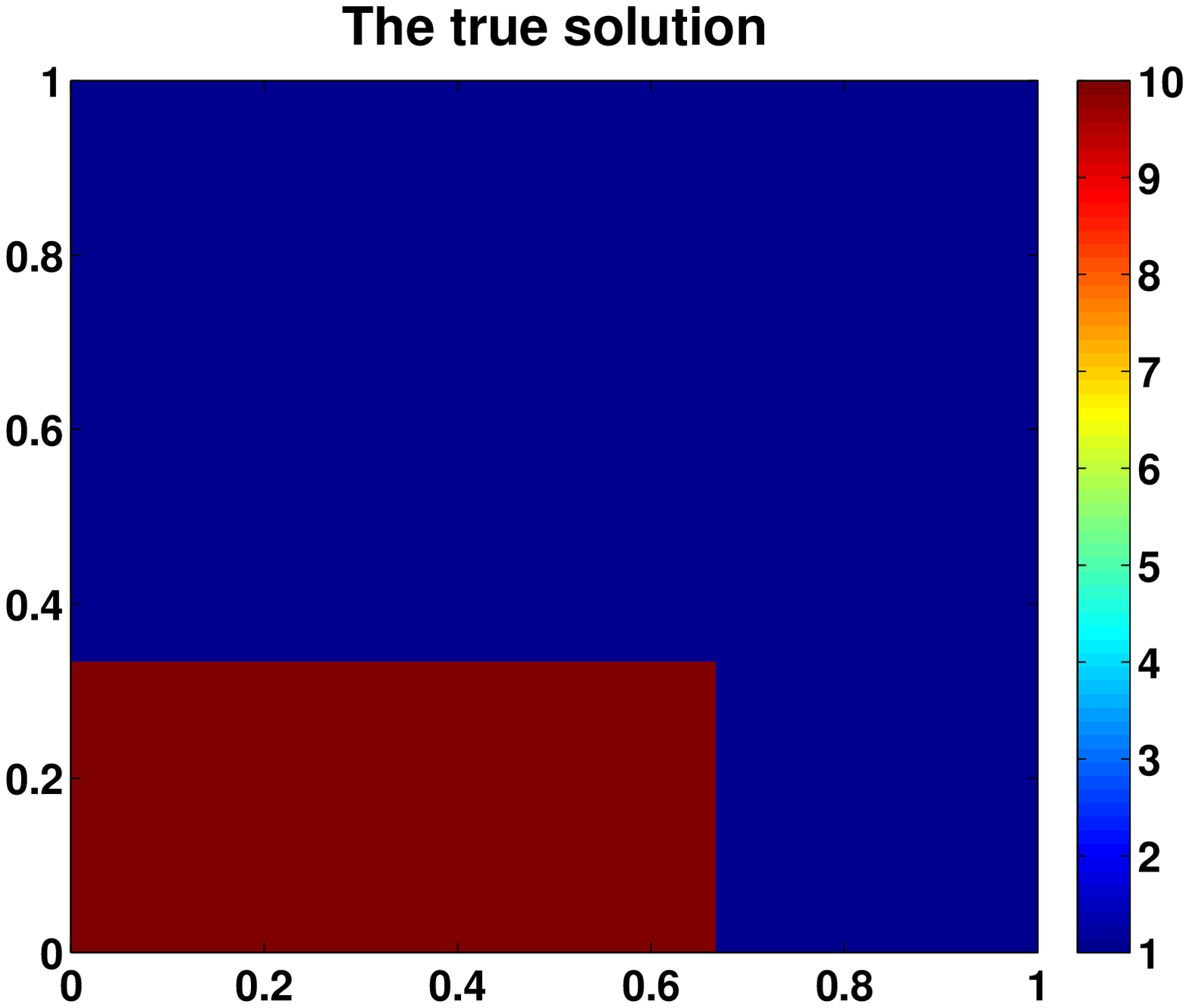}
}
\subfigure[]{
\includegraphics[width=0.31\textwidth, height=0.36\textwidth]{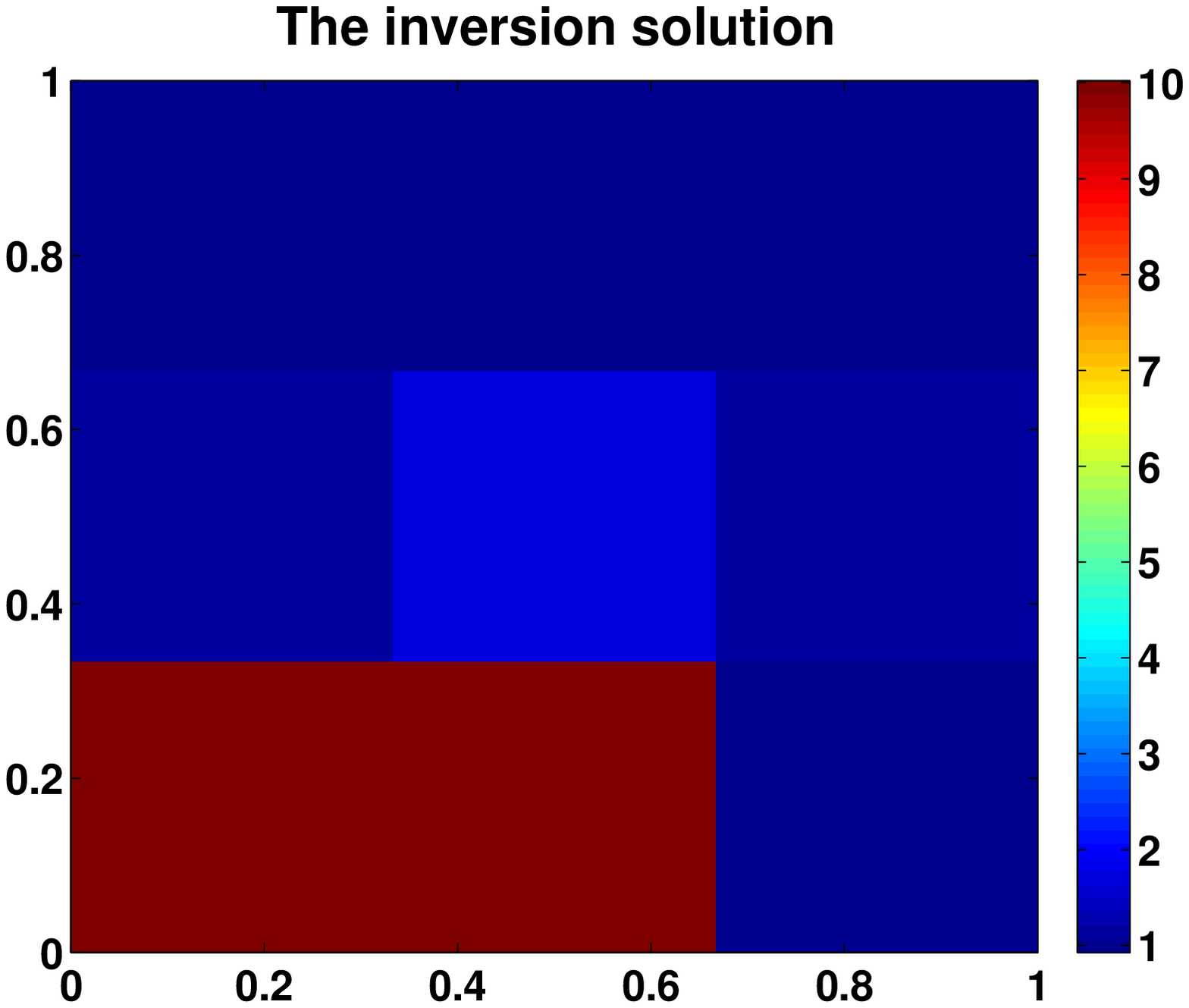}
}
\caption{The true solution and inversion solution in Section 6.2.}\label{fig-Bv-solution}
\end{figure}

\begin{figure}
\centering
\subfigure[]{
\includegraphics[width=0.31\textwidth, height=0.36\textwidth]{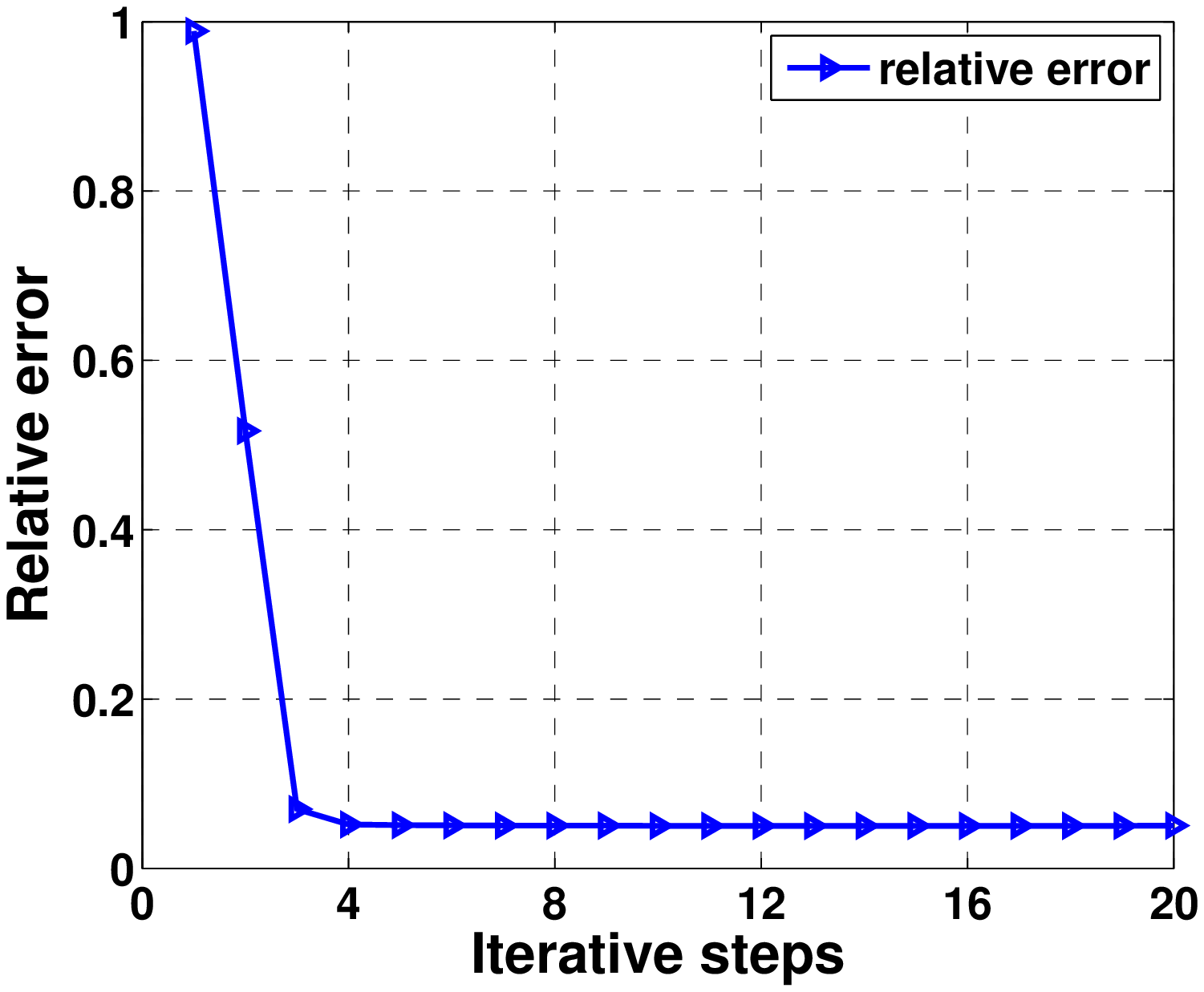}
}
\subfigure[]{
\includegraphics[width=0.31\textwidth, height=0.36\textwidth]{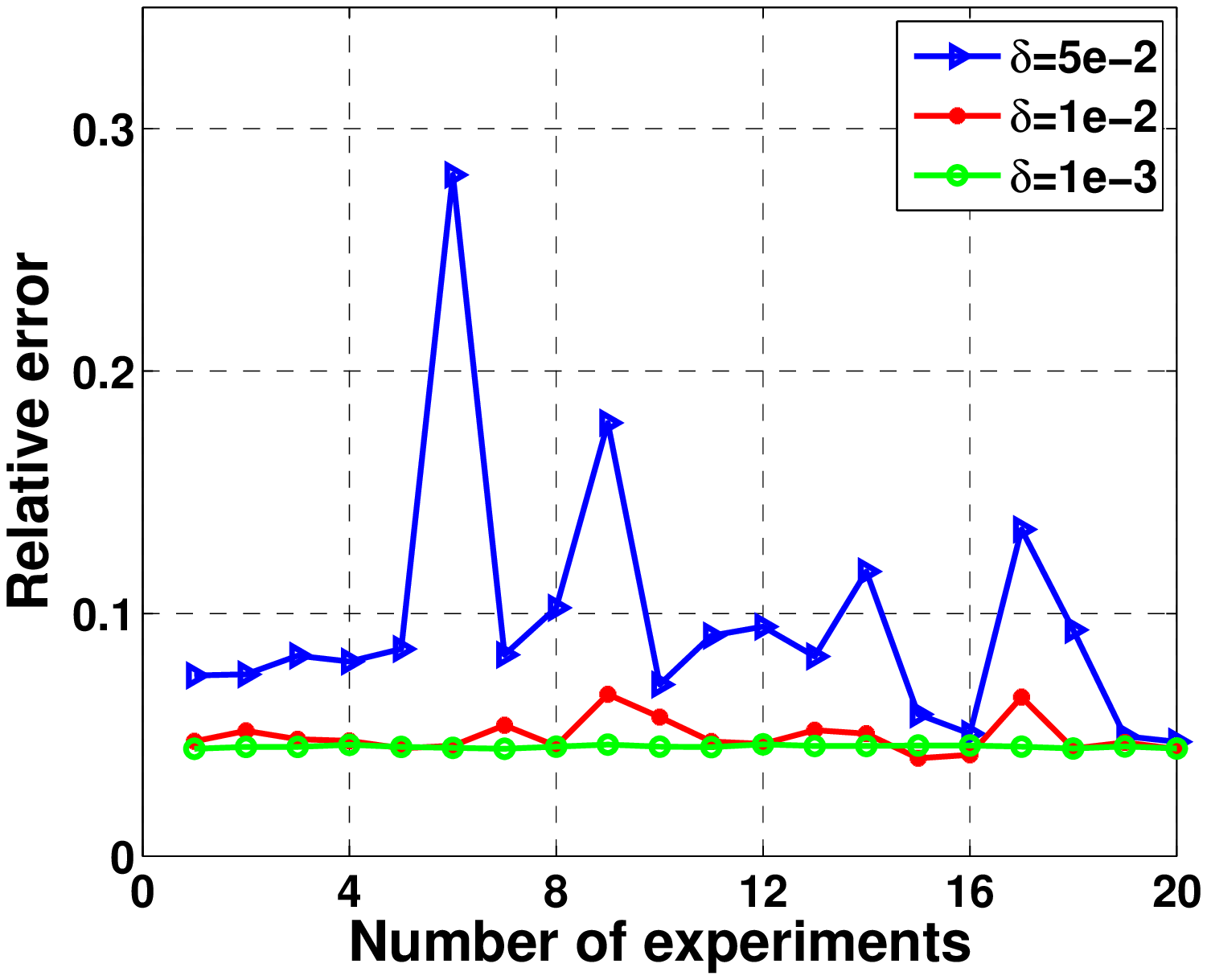}
}
\caption{(a): Relative errors versus iterative steps  in Section 6.2, (b): relative errors of
 20 experiments with different noise levels in Section 6.2.}\label{fig-Bv-relative}
\end{figure}

Table \ref{Bv-order} lists the result of the relative errors $\bm{\varepsilon}$ for different  fractional orders $\alpha$.
From the table, we find that the fractional order $\alpha$ has slight impact on the inversion solution, which is in agreement with the result in \cite{Sun-2017}.
The algorithm can be applied to the PDE model (\ref{model-fpde}) for different fractional orders between $0$ to $1$.
The numerical results for different regularization parameters are summarized in Table \ref{Bv-regular}. From this table, we observe that the regularization parameter $\gamma$ should be chosen in a proper range in order to get a better inversion solution. It shows that $\gamma=5e-3$ is the best parameter
among the four regularization parameter values for this example.

\begin{table}
\caption{Relative errors versus different fractional orders in Section 6.2.}\label{Bv-order}
  \centering
  \begin{tabular}{ccc}
  \hline
  $\alpha$&$a$&$\bm{\varepsilon}$\\\hline
  0.2 & (10.0795, 0.6058, 1.0384, 9.9579, 1.5174, 0.9600, 1.0074, 0.9828, 0.8238) & 0.0474 \\\hline
  0.4 & (10.0333, 1.2887, 0.9244, 9.9330, 1.6600, 1.0482, 1.0146, 1.1170, 0.9493) & 0.0515
  \\\hline
  0.6 & (10.0095, 0.7049, 1.0279, 9.9727, 1.5397, 0.9723, 0.9861, 1.0992, 1.0313) & 0.0435
 \\\hline
  0.8 & (9.9158, 1.5844, 0.9677, 9.9811, 1.7181, 1.0048, 1.1031, 1.1602, 0.9018) & 0.0664
  \\\hline
  \end{tabular}
\end{table}

\begin{table}
\caption{Relative errors versus different regularization parameters in Section 6.2.}\label{Bv-regular}
  \centering
  \begin{tabular}{ccc}
  \hline
  $\gamma$&$a$&$\bm{\varepsilon}$\\\hline
  5e-2 & (10.0686, 0.1806, 0.9822, 10.0180, 1.1028, 1.0226, 0.9728, 0.9968, 0.9995) & 0.0577\\\hline
  5e-3 & (9.9581, 0.9929, 1.0014, 10.0081, 1.5831, 0.8997, 0.9250, 1.1270, 1.0529) & 0.0426
 \\\hline
  5e-4 & (9.9714, 0.1806, 1.0161, 9.9695, 1.4353, 0.9684, 0.9898, 0.9885, 1.1179) & 0.0651
 \\\hline
  5e-5 & (10.0242, 2.0835, 1.0037, 9.9742, 0.7024, 0.9863, 1.0161, 0.1689, 0.9775) & 0.0972
 \\\hline
  \end{tabular}
\end{table}

\subsection{Inversion for piecewise smooth function}\label{3}
In this subsection, we consider the true reaction coefficient as  a piecewise smooth function, i.e.,
\[q(x,y)=\left\{\begin{array}{ll}
1,&\text{$0\leq x < 1/4,\
0\leq y \leq 1$},\\
12x-2,&\text{$1/4\leq x < 1/2,\
0\leq y \leq 1$},\\
4,&\text{$1/2\leq x < 3/4,\
0\leq y \leq 1$},\\
-12x+13, &\text{$3/4\leq x \leq1,\
0\leq y \leq 1$}.
\end{array}\right.\]

Different regularization methods may lead to different inversion solutions.
We first separately  employ the $L^{2}$ and $BV$ regularization methods to see if they can give a good reconstruction.
We use a $20\times 20$ grid to solve the forward problem and choose a piecewise constant basis $\{\psi_{\Omega_{i}}\}_{i=1}^{N_{1}}$ to represent $q$ as
\[
q(x,y)=\sum_{i=1}^{N_{1}}a_{i}\chi_{\Omega_{i}}(x).
\]
Such a basis have the property with
\[
\chi_{\Omega_{i}}(x)=
\left\{\begin{array}{ll}
1,\ \ x \in \Omega_{i},\\
0, \ \ else.
\end{array}\right.
\]
where $\Omega_{i}=[x_{i},x_{i+1}]$,\ $ i=0,\ 1,\cdots, N_{1}-1$.

Hence the dimension of $a$ is twenty.
{The measurement data are taken at time $t=(21:2:100)\cdot\Delta t$.}
The left plot in Fig \ref{fig-L2-Bv} shows the inversion results using classical $L^2$ penalty. By the plot we can see that the $L^{2}$ regularization solution is more smooth as $\beta$ becomes large.
The right plot in  Fig \ref{fig-L2-Bv}  shows the solutions using BV regularization method.
By the plot,  we find  that  the $BV$ regularization solution has more oscillations. This  is similar to stair-case effect. By the numerical test,  we find  that $L^2$ regularization method may recover smooth coefficient  and $BV$ penalty may produce accurate reconstruction of blocky images.
Thus, it may be helpful  to combine these two penalties together for obtaining a better inversion result for the case of  piecewise smooth functions.

Then, the mixed $L^{2}+BV$ regularization method is used here.
The true solution  and inversion solution are plotted in Fig \ref{fig-L2-Bv-piecewise} with $\beta=5.005e-3$ and
  $\gamma=1.005e-6$ fixed and we have the relative error $\bm{\varepsilon}=0.0578$.
The figure shows  that the multi-parameter model  gives a good inversion result. This is because the parameter $\beta$ and $\gamma$ can  make a tradeoff between $L^2$ and $BV$ penalty,
and share  the both effects from them.
In order to show the advantages of multi-parameter model further, we compare
the inversion solutions by the three regularization methods:   $L^{2}$ penalty,
 $BV$ penalty and $L^2+BV$ penalty.
The left plot in Fig \ref{fig-L2-Bv-conver} shows  the true solution  and the inversion solutions by the three different regularization methods. It can be obviously seen that the $L^{2}+BV$ regularization method can
most accurately  identify the discontinuous  structure of $q$ among the three methods. However, the $L^{2}$ regularization solution is over-smooth and the $BV$ regularization solution is highly oscillatory. Then we may conclude that the $L^{2}+BV$ regularization method is more appropriate for recovering piecewise smooth function.
The right plot  in Fig \ref{fig-L2-Bv-conver} illustrates that relative error $\bm{\varepsilon}$
versus the iteration number.  By the plot,  we see that the relative error
decreases dramatically during the first four iterations, which implies that the algorithm converges rapidly.
\begin{figure}
\centering
\subfigure[]{
\includegraphics[width=0.31\textwidth, height=0.36\textwidth]{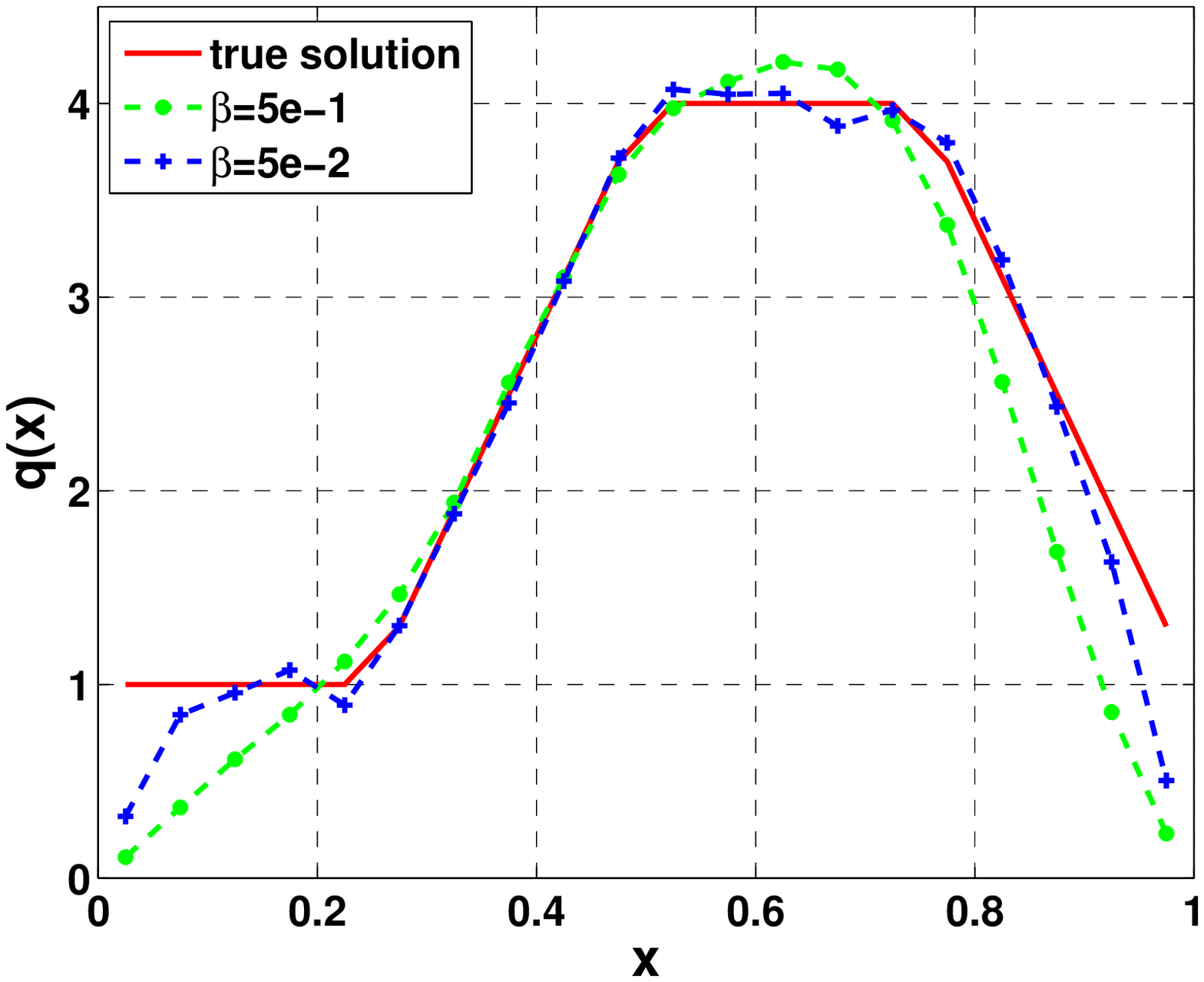}
}
\subfigure[]{
\includegraphics[width=0.31\textwidth, height=0.36\textwidth]{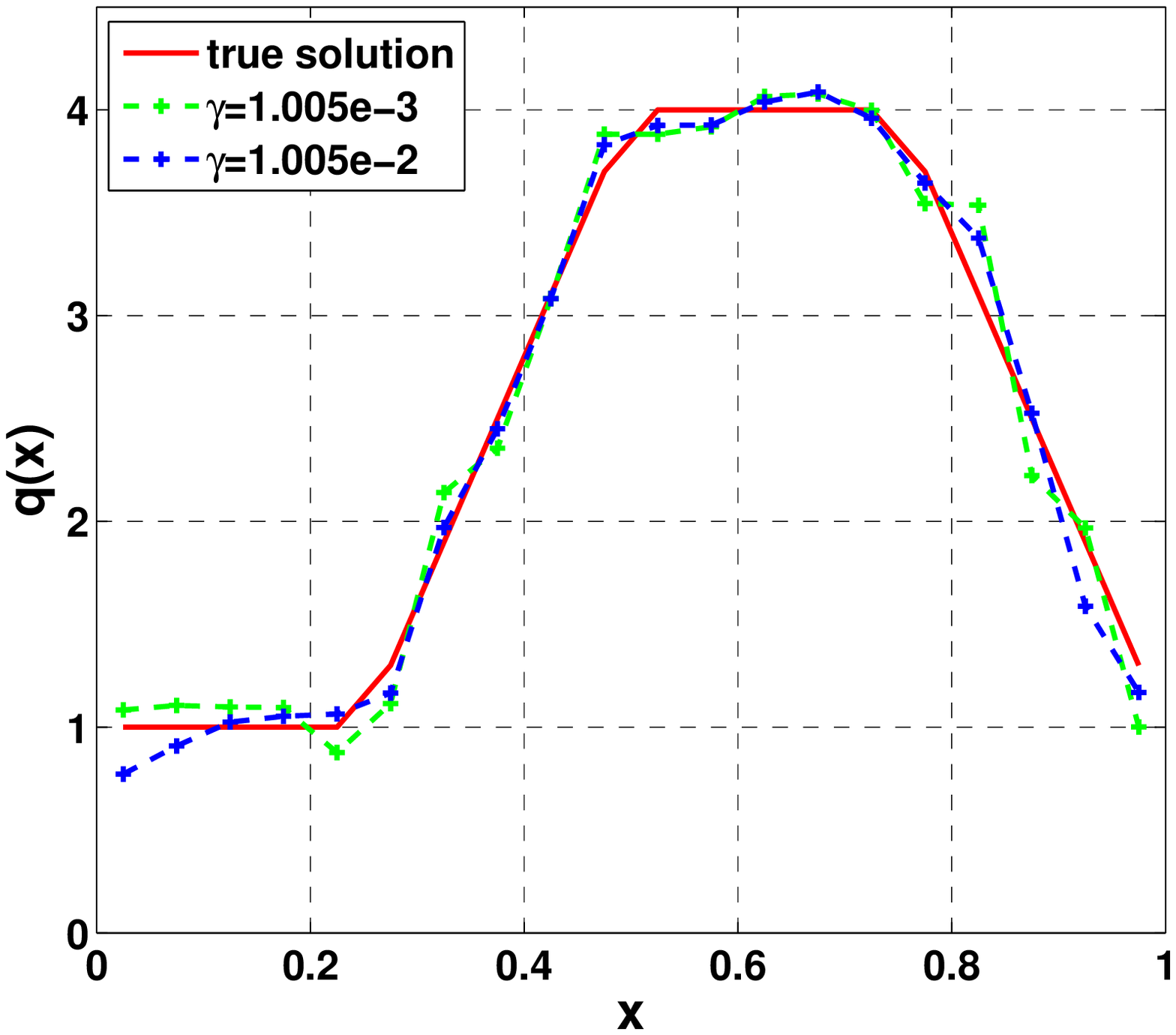}
}
\caption{(a): The true solution and inversion solution with different $\beta$ by $L^2$ regularization method, (b): the true solution and inversion solution with different $\gamma$ by $BV$ regularization method in Section 6.3.}
\label{fig-L2-Bv}
\end{figure}

\begin{figure}
\centering
\subfigure[]{
\includegraphics[width=0.31\textwidth, height=0.36\textwidth]{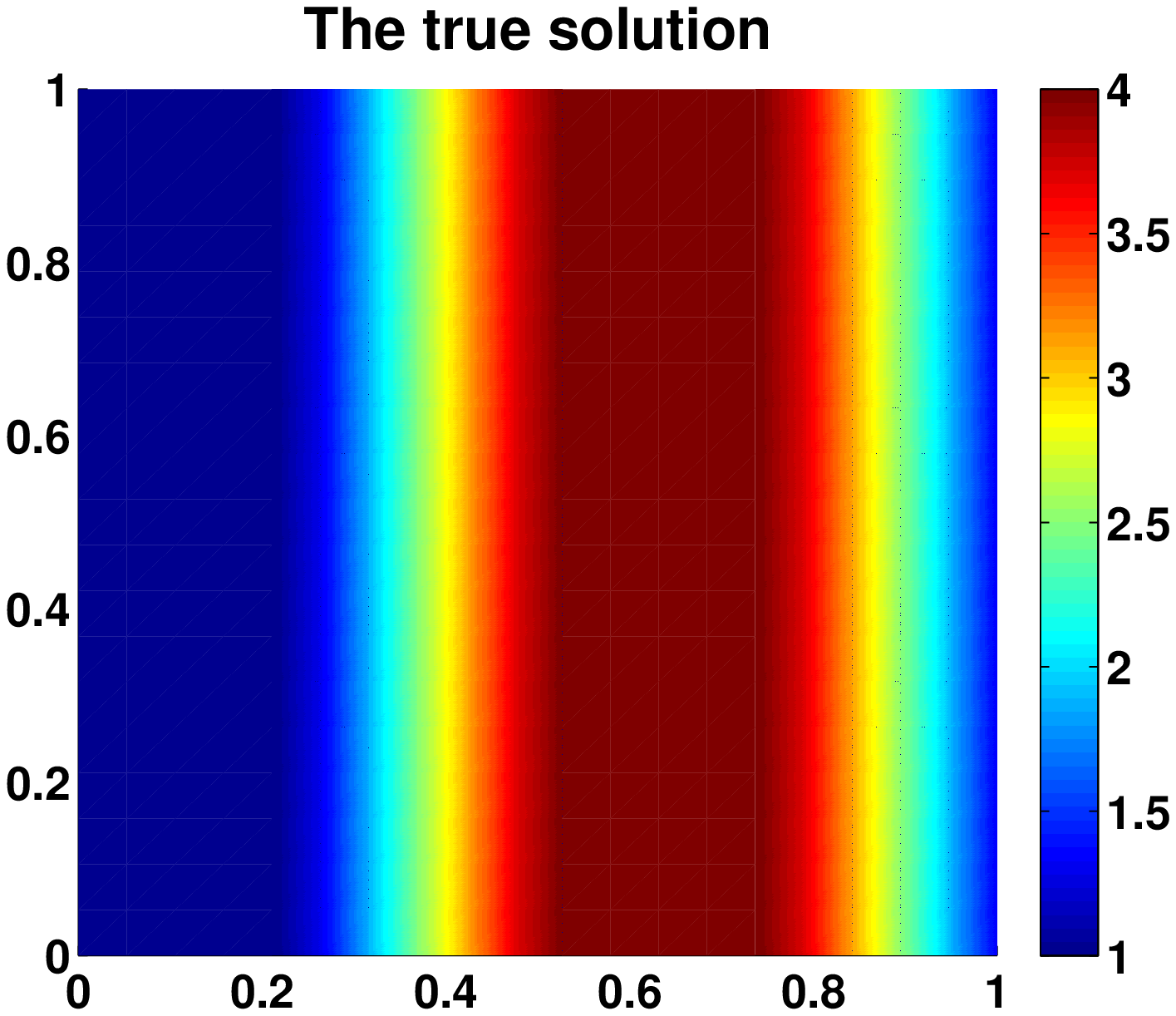}
}
\subfigure[]{
\includegraphics[width=0.31\textwidth, height=0.36\textwidth]{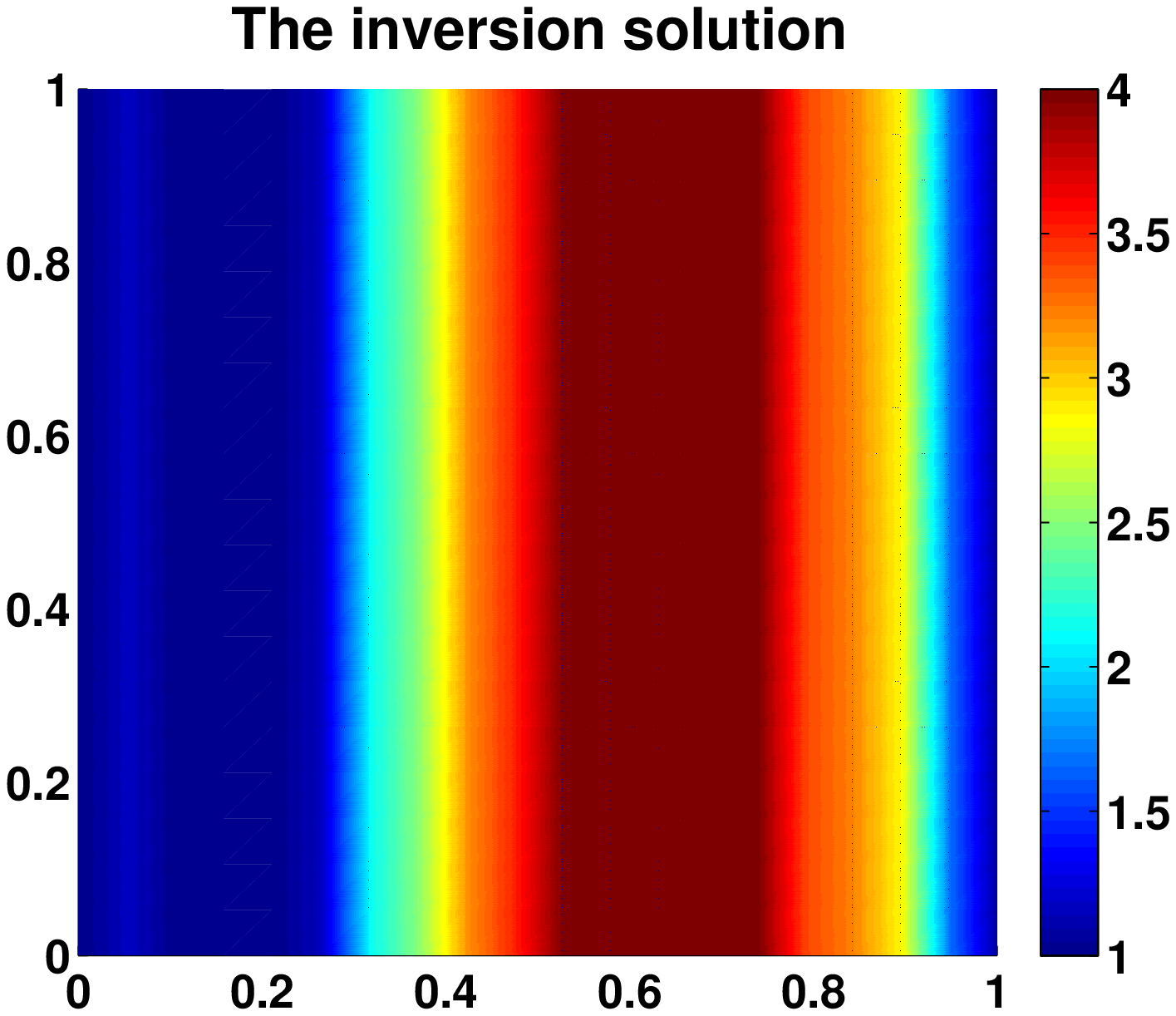}
}
\caption{The true solution and inversion solution in Section 6.3.}
\label{fig-L2-Bv-piecewise}
\end{figure}

\begin{figure}
\centering
\subfigure[]{
\includegraphics[width=0.31\textwidth, height=0.36\textwidth]{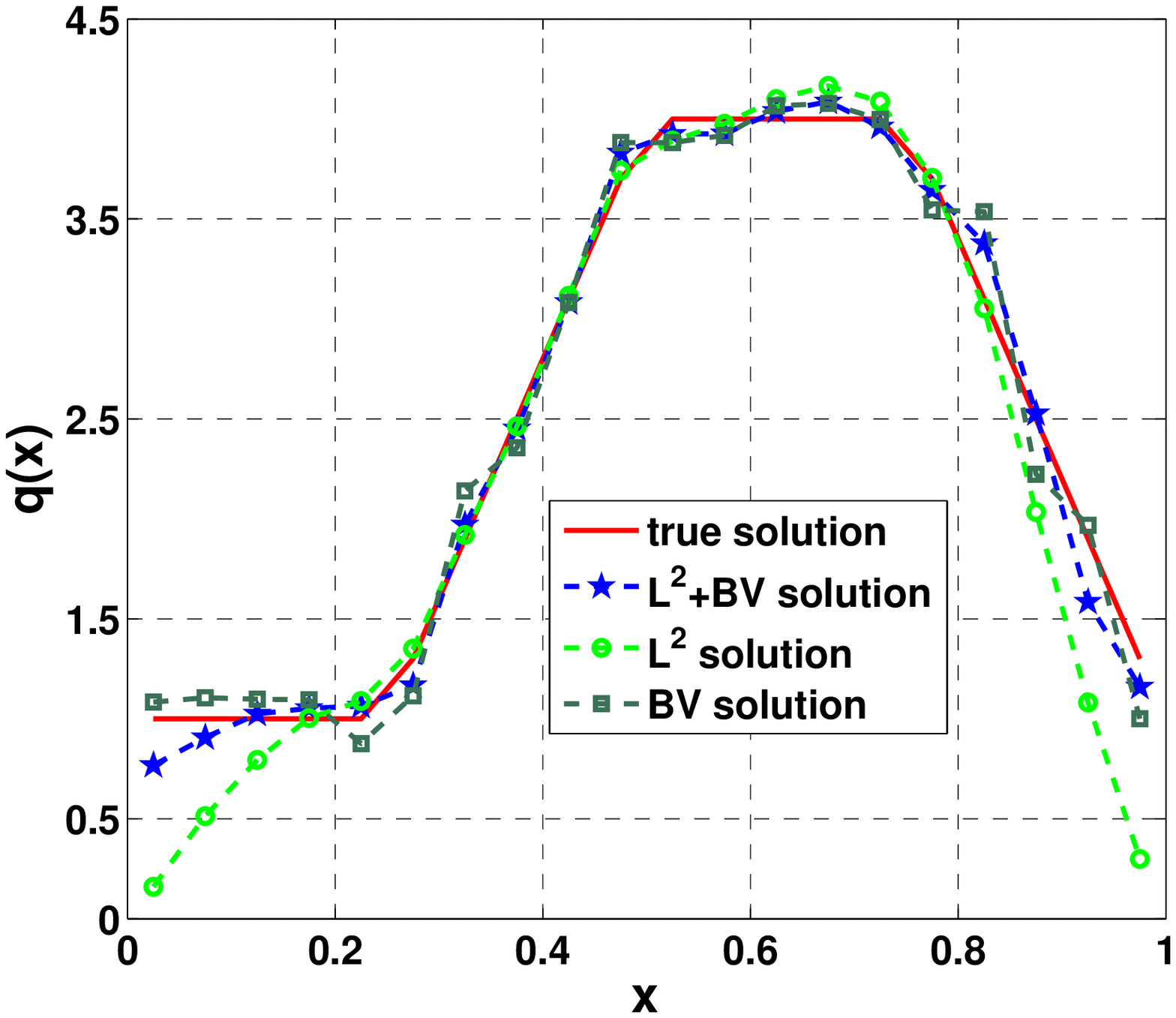}
}
\subfigure[]{
\includegraphics[width=0.31\textwidth, height=0.36\textwidth]{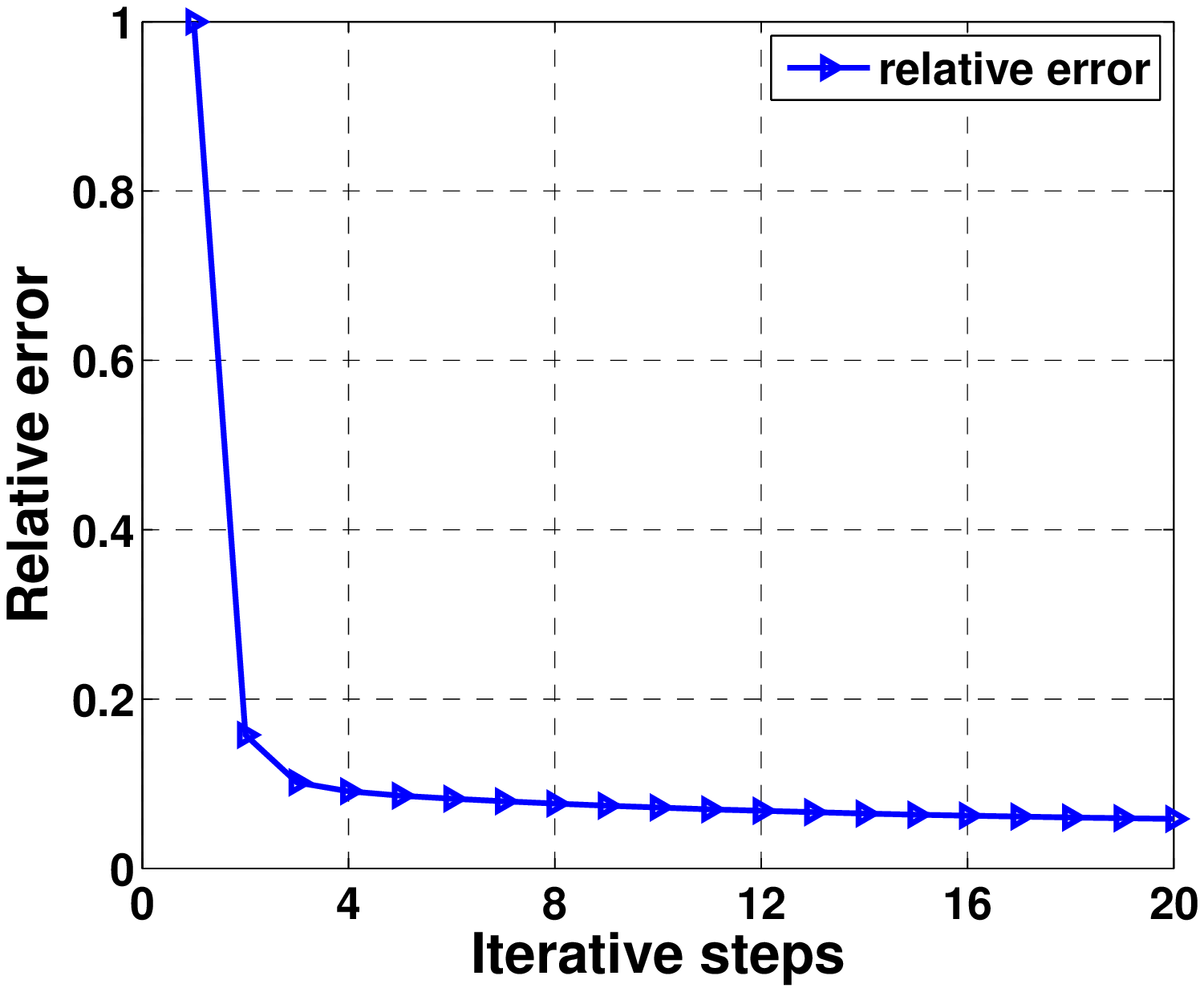}
}
\caption{(a): The true solution and inversion solution by $L^2+BV$, $L^{2}$, and $BV$ regularization methods, respectively. (b): relative errors versus iterative steps with $L^2+BV$ regularization method in Section 6.3.}
\label{fig-L2-Bv-conver}
\end{figure}

\section{Conclusions}
We considered an reaction coefficient inverse problem for time fractional diffusion equations.
We first proved that the forward operator is continuous with respect to the unknown parameter.
In practice, there exist  various types of the unknown reaction coefficients to recover.  We can
 solve a minimization problem through adding a muti-parameter penalty to the  fit-to-data functional.
To obtain the boundary flux  at each iteration,
the mixed finite element method was used for solving the forward problem, which can
give accurate flux.
By the extensive numerical simulations,
it could be concluded that different regularization methods are used  for recovering different types of the unknown coefficients.
The L-M algorithm would have difficulty in the  high dimension of unknown $q$. Thus it is
necessary to make  dimension reduction for parameters. To this end, we chose different basis functions for $q$ based on some priori information such that the unknown coefficient can be represented in a
low dimension space.
In the future,  we may consider the  regularization parameters depending on the given data,
and study  the coefficient and fractional order inverse problems for multi-term time-fractional diffusion equations.

\section*{Acknowledgments}
L. Jiang  acknowledges the support of Chinese NSF 11471107. G. Zheng acknowledges the support of Chinese NSF 11301168.

\end{document}